\author{Stuart Martin}
\address{Department of Pure Mathematics and Mathematical Statistics\\ University of Cambridge \\Cambridge CB3 0WA \\ UK}
\email{sm137@cam.ac.uk}
\providecommand\@dotsep{5}\renewcommand{\listoftodos}[1][\@todonotes@todolistname]{%
  \@starttoc{tdo}{#1}}
\newcommand{\Hom}{\operatorname{Hom}}
\newcommand{\End}{\operatorname{End}}
\newcommand{\supp}{\operatorname{supp}}
\newcommand{\id}{\operatorname{id}}
\newcommand{\Z}{\mathbb{Z}}
\newcommand{\N}{\mathbb{N}}
\newcommand{\Q}{\mathbb{Q}}
\newcommand{\F}{\mathbb{F}}
\newcommand{\mathscr}{\mathcal}
\definecolor{rdylgn60}{HTML}{D73027}
\definecolor{rdylgn61}{HTML}{FC8D59}
\definecolor{rdylgn62}{HTML}{FEE08B}
\definecolor{rdylgn63}{HTML}{D9EF8B}
\definecolor{rdylgn64}{HTML}{91CF60}
\definecolor{rdylgn65}{HTML}{1A9850}
\definecolor{rdylgn66}{HTML}{006837}
\definecolor{ca1}{HTML}{72497F}
\definecolor{ca2}{HTML}{5C7B78}
\definecolor{cb1}{HTML}{0862A0}
\definecolor{cb2}{HTML}{CD9B3E}
\definecolor{cc1}{HTML}{C55824}
\definecolor{cc2}{HTML}{288338}
\definecolor{cd}{HTML}{CF838A}
\definecolor{ce}{HTML}{FFAF3A}
\definecolor{cf}{HTML}{AFBF5A}
\definecolor{cg}{HTML}{7ACF99}
\definecolor{ch}{HTML}{8AB1CF}
\definecolor{ci}{HTML}{3B58CF}
\definecolor{cj}{HTML}{736163}
\definecolor{lightgray}{HTML}{DDDDDD}
\theoremstyle{plain}
\newtheorem{theorem}{Theorem}[section]
\newtheorem{proposition}[theorem]{Proposition}
\newtheorem{definition}[theorem]{Definition}
\newtheorem{lemma}[theorem]{Lemma}
\newtheorem{corollary}[theorem]{Corollary}
\theoremstyle{remark}
\numberwithin{equation}{section}
\tikzset{%
  spot/.style={color=black, thin, dashed},
  rline/.style={color=green, line width=2pt},
  sline/.style={color=blue, line width=2pt},
  tline/.style={color=red, line width=2pt},
  uline/.style={color=green, line width=2pt},
  line/.style={color=#1, line width=2pt},
  line/.default=blue,
  rdot/.style={color=green, thin, fill},
  sdot/.style={color=blue, thin, fill},
  tdot/.style={color=red, thin, fill},
  udot/.style={color=green, thin, fill},
  dot/.style={color=#1, thin, fill},
  dot/.default=blue
}
\newcommand{\JW}{{\rm JW}}
\newcommand{\TL}{{\rm TL}}
\newcommand{\f}[1]{{\mathrm{f}[#1]}}
\newcommand{\TLcat}{{\mathscr{T\!\!L}}}
\newcommand{\gaussianquant}{\genfrac{[}{]}{0pt}{}}
\newcommand{\restr}{\mathord\downarrow}
\newcommand{\ind}{\mathord\uparrow}
\author[R. A. Spencer]{Robert A. Spencer}
\address{Department of Pure Mathematics and Mathematical Statistics\\ University of Cambridge \\Cambridge CB3 0WA \\ UK}
\email{ras228@cam.ac.uk}
\date{}
\newcommand{\pljwnQ}{\prescript{p}{\ell}{\JW}^{\Q(\delta)}_n}
\newcommand{\pljwnR}{\prescript{p}{\ell}{\JW}^{k}_n}
\newcommand{\pljwfnR}{\prescript{p}{\ell}{\JW}^{k}_{\f{n}}}
\newcommand{\pljwfnQ}{\prescript{p}{\ell}{\JW}^{\Q(\delta)}_{\f{n}}}
\newcommand{\pljwnmtQ}{\prescript{p}{\ell}{\JW}^{\Q(\delta)}_{n-t}}
\newcommand{\pljwnmtR}{\prescript{p}{\ell}{\JW}^{k}_{n-t}}
\title{$(\ell,p)$-Jones-Wenzl Idempotents}
\begin{document}

\begin{abstract}
  The Jones-Wenzl idempotents of the Temperley-Lieb algebra are celebrated elements defined over characteristic zero and for generic loop parameter.
  Given pointed field $(R, \delta)$, we extend the existing results of Burrull, Libedinsky and Sentinelli to determine a recursive form for the idempotents describing the projective cover of the trivial $\TL_n^R(\delta)$-module.
\end{abstract}

\maketitle

\section*{Introduction}\label{sec:intro}
The Temperley-Lieb algebra, $\TL_n$, defined over ring $R$ with distinguished element $\delta$ by the generators ${\{u_i\}}_{i=1}^{n-1}$ and relations
\begin{align}\label{eq:def-1}
  u_i ^2 &= \delta u_i\\\label{eq:def-2}
  u_i u_j &= u_j u_i & |i-j| \ge 2\\\label{eq:def-3}
  u_i u_{i\pm1}u_i &= u_i & 1 \le i \pm 1 < n,
\end{align}
has recently been recast into the limelight.

First studied over characteristic zero, as algebras of transfer operators in lattice models, these structures found extensive use in physics and, later, knot theory where Vaughan Jones famously used them to define the Jones polynomial invariant.

More recently, Temperley-Lieb algebras and their variants have become the subject of study by those seeking to understand Soergel bimodule theory~\cite{elias_2016}.
Here they are intricately linked to the categorification of the Hecke algebra of type $\tilde{A_1}$ by two-colour Soergel bimodules.
From this categorification arises interesting ``canonical'' bases of certain Hecke algebras, of which the Kazhdan-Lusztig basis is probably the most famed.

However, other interesting bases occur, particularly when the underlying ring has positive characteristic.
Work by Jensen and Williamson~\cite{jensen_williamson_2015} develops the so-called $p$-canonical basis or $p$-Kazhdan-Lusztig basis for crystallographic Coxeter types.
The underlying calculations in the two-colour case reduce to results in the representation theory of the Temperley-Lieb algebra.

In the language of Soergel bimodules, Jones-Wenzl idempotents describe the indecomposable objects.
Recently, Burrull, Libedinsky and Sentinelli~\cite{burrull_libedinsky_sentinelli_2019} determined the corresponding elements of $\TL_n$ defined over a field of characteristic $p \ge 2$ for the parameter $\delta = 2$.
The results of Erdmann and Henke~\cite{erdmann_henke_2002} implicitly describing the $p$-canonical bases are crucial.

\vspace{1em}
\noindent
Throughout these results, the role of the Temperley-Lieb algebra as the centralizer ring $\End_{U_q(\mathfrak{sl}_2)}(V^{\otimes n})$ has been key to understanding its modular representation theory~(see \cite{andersen_2019, andersen_catharina_tubbenhauer_2018}, and the reliance on~\cite{erdmann_henke_2002} in \cite{cox_graham_martin_2003,burrull_libedinsky_sentinelli_2019} for examples).
The explicit nature of the tilting theory of $U_q(\mathfrak{sl}_2)$ has underpinned most of the results.

However, the algebras themselves admit a pleasing diagrammatic presentation stemming from \cref{eq:def-1,eq:def-2,eq:def-3}.
Most of the theory of the characteristic zero case was determined purely ``combinatorially'' from this~\cite{westbury_1995, ridout_saint_aubin_2014}.
In~\cite{spencer_2020_modular} the second author re-derives many of the results known about the representation theory of $\TL_n$ over positive characteristic without recourse to tilting theory of $U_q(\mathfrak{sl}_2)$.

This paper builds on~\cite{spencer_2020_modular} and~\cite{burrull_libedinsky_sentinelli_2019} to construct $(\ell, p)$-Jones-Wenzl idempotents giving the indecomposable objects in the most general case of any field and any parameter.
This answers one of the questions in~\cite[1.3.5]{burrull_libedinsky_sentinelli_2019} by showing that the construction given extends quite simply to other (non-integral) realisations.
That is, if the reflection representation of $D_{2\ell}$ of the form
\begin{equation*}
  s \mapsto \begin{pmatrix}-1&\delta\\0&1\end{pmatrix}\quad\quad\quad\quad
  t \mapsto \begin{pmatrix}1&0\\\delta&-1\end{pmatrix}
\end{equation*}
for adjacent reflection generators $s$ and $t$ of $D_{2\ell}$ is chosen for the $\tilde A_1$ Hecke category, then the indecomposable objects are given by $(\ell, p)$-Jones-Wenzl idempotents.
If the two generators have differing off-diagonal values so the realisation is not symmetric, minor changes to the result occur with quantum numbers replaced by two-colored quantum numbers~\cite[\S A.1]{elias_2016}.
The exact modification is not complicated (it predominantly consists of in-place replacements), but the careful handling of even and odd cases is beyond the scope of this paper.

\vspace{2em}
\noindent
This paper is arranged as follows.
We briefly recall some of the needed concepts and define our notation in \Cref{sec:notation}.
In \Cref{sec:jones-wenzl} we recall the known theory of Jones-Wenzl idempotents over characteristic zero and make some observations in positive characteristic.
In \Cref{sec:p-jones-wenzl} we recount the construction of $p$-Jones-Wenzl elements due to Burrull, Libedinsky and Sentinelli~\cite{burrull_libedinsky_sentinelli_2019}, with a slight modification to allow for generic parameter.
\Cref{sec:induction} collects the properties of projective modules for $\TL_n$ that will be needed in the result and we prove our main theorem in \cref{sec:results}.
Finally, \cref{sec:traces} examines the action of the Markov trace on our new elements.

\section{Notation and Conventions}\label{sec:notation}
We will use the notation and conventions for $\TL^R_n(\delta)$ set out in~\cite{spencer_2020_modular}.
Note that in this formulation, closed loops resolve to a factor of $\delta$ as opposed to $-\delta$ as is often found in the literature.
Throughout, we will omit the $\delta$ from our notation for the algebra $\TL_n^R$, as every ring $R$ discussed will be naturally unambiguously pointed.

The Temperley-Lieb category $\TLcat$ is simply the linear category on $\N$ with morphism spaces spanned by Temperley-Lieb diagrams.
Objects in this category will be written as $\underline{n}$.

Throughout we distinguish the \emph{diagram basis} for $\TL_n$.
This is a basis given by all $(n,n)$ Temperley-Lieb diagrams.
Since multiplication of elements in this basis has image in its $\Z[\delta]$-span, this will allow us to move between base rings.

The Temperley-Lieb algebras are all equipped with an anti-automorphism, denoted $\iota$, which acts on diagrams by vertical reflection.
This linear map satisfies $\iota(xy) = \iota(y)\iota(x)$ and $\iota(\iota x) = x$ for each pair of elements $x, y \in \TL_n$.
This is known as the \emph{cellular involution}.

Each Temperley-Lieb algebra admits a \emph{trivial module}.
This is a one-dimensional module on which every diagram except the identity acts as zero.
The identity diagram acts as one.
Equivalently, if $I_n$ is the maximal ideal generated by all non-identity diagrams, then the trivial module is $\TL_n / I_n$.

Of critical importance will be the \emph{standard modules} of $\TL_n$ also known as \emph{cell modules}, denoted $S(n,m)$ for $m \le n$ of the same parity.
These have basis given by diagrams from $n$ to $m$ with the natural left action of $\TL_n$, modulo diagrams with fewer than $m$ through-strands.
The trivial module is thus $S(n,n)$.

\vspace{1em}
\noindent
The \emph{quantum numbers} $[n]$ are polynomials in $\Z[\delta]$ given by $[0] = 0$, $[1] = 1$ and $[n+1] = \delta[n] - [n-1]$.
We will often specialise these to a given ring, if $\delta$ is understood, by considering their image under the natural ring homomorphism.
If $\delta \in R$, we say that $\delta$ \emph{satisfies} $[n]$ if this image of $[n]$ is zero in $R$.

\vspace{1em}
\noindent
If $\ell$ and $p$ are given, then the \emph{$(\ell, p)$-digits} or the \emph{$(\ell,p)$-expansion} of the natural number $n$ are the numbers $n_i$ such that $n = \sum_{i = 0}^b n_i p^{(i)}$ where
\begin{equation*}
  p^{(i)} = \begin{cases}1 & i = 0 \\p^{i-1}\ell & i> 0\end{cases}
\end{equation*}
and $0\le n_i < p^{(i+1)}$.


\section{Jones-Wenzl Idempotents}\label{sec:jones-wenzl}
Here we discuss the Jones-Wenzl idempotents, their construction, and when they exist.

\subsection{Semi-simple case}
The Jones-Wenzl idempotent, denoted $\JW_n$, is a celebrated element of $\TL_n^{\Q(\delta)}$ (where $\delta$ is indeterminate).
It is the unique idempotent $e$ such that $\TL_n \cdot e$ is isomorphic to the trivial module.
As such it satisfies the relations
\begin{equation}\label{eq:left_jw_def}
  u_i \cdot \JW_n = 0 \quad \forall\; 1\le i < n.
\end{equation}
It is clear that since $\JW_n$ is idempotent, the coefficient of the identity diagram $\underline{n} \to \underline{n}$ is 1.

\begin{lemma}\label{lem:defining_jw}
  Let $R$ be any pointed ring.
  Suppose $e$ is an idempotent of $\TL_n^R$ such that \cref{eq:left_jw_def} holds.  Then $e$ is invariant under the cellular involution $\iota$, and $e \cdot u_i = 0$ for all $1\le i < n$.
  Thus $e$ is the unique idempotent of $\TL_n^R$ satisfying \cref{eq:left_jw_def}.
\end{lemma}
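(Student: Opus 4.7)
The plan is to exploit the cellular involution $\iota$ together with a structural observation about the ideal $I_n \le \TL_n^R$ spanned by the non-identity diagrams. First I would note that every non-identity diagram has a cap at the bottom and so factors as $z \cdot u_i$ for some diagram $z$ and some index $i$; this exhibits $I_n$ as the right ideal $\sum_i \TL_n \cdot u_i$. Combined with the hypothesis $u_i e = 0$, this immediately yields the key fact that $x \cdot e = 0$ for every $x \in I_n$.

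With that in hand, I would decompose $e = c \cdot 1 + x_e$ with $x_e \in I_n$, and deduce from $e^2 = e$ that $c^2 = c$, since $I_n$ is closed under multiplication so the identity-coefficient of $e^2$ is just $c^2$. The case $c = 0$ places $e$ inside $I_n$, and the key fact forces $e = e^2 = 0$, making all conclusions trivial. So the interesting case is $c = 1$.

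The $\iota$-invariance then follows cleanly: since $\iota$ fixes the identity diagram and permutes non-identity diagrams, $\iota(e) = 1 + \iota(x_e)$, and the key fact gives
\[ \iota(e) \cdot e \;=\; e + \iota(x_e) \cdot e \;=\; e. \]
Applying $\iota$ to this identity (and using that $\iota$ is an involutive anti-automorphism) produces $\iota(e) \cdot e = \iota(e)$; comparing the two expressions forces $e = \iota(e)$. The second conclusion is now immediate from $\iota(u_i) = u_i$, since $e \cdot u_i = \iota(e) \cdot u_i = \iota(u_i \cdot e) = 0$.

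For the uniqueness statement, given any second idempotent $e'$ satisfying \cref{eq:left_jw_def}, the same argument yields $\iota(e') = e'$. The key fact applied to $e'$ gives $e \cdot e' = (1 + x_e) e' = e'$; applying $\iota$ to this identity produces $e' \cdot e = e'$, and swapping the roles of $e$ and $e'$ gives $e' \cdot e = e$, whence $e = e'$. The only real obstacle is the initial diagram-basis observation that each non-identity diagram factors through some $u_i$ on the right — once that is in place everything reduces to formal manipulation with $\iota$ and the idempotent relation.
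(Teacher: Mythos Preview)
Your proof is correct and essentially identical to the paper's: both observe that $I_n$ annihilates $e$ from the left (since every non-identity diagram factors through some $u_i$), write $e = \id_n + f$ with $f \in I_n$, compute $(\iota e)\cdot e = e$ to obtain $\iota$-invariance, and deduce right-annihilation and uniqueness from there. You are slightly more careful about the identity-coefficient $c$ (the paper simply writes $e = \id_n + f$ without justifying $c=1$), but this extra care does not change the structure of the argument.
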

\begin{proof}
  Write $e = \id_n + f$.
  Thus $f$ lies in the ideal $I_n$, and in particular lies in the span of diagrams factoring through some $u_i$.
  Hence $fe = 0$.
  However $\iota(f)$ is also in $I_n$ so $(\iota f) e = 0$.
  Thus
  \begin{equation*}
    e = (\id_n + \iota f)\cdot e = (\iota e) e ,
  \end{equation*}
  which is fixed under $\iota$.
  Thus $0 = \iota(u_i \cdot e) = \iota e \cdot \iota u_i = e \cdot u_i$.

  Finally, if $e_1$ and $e_2$ are two such idempotents, then both have unit coefficient for the identity diagram and so $e_1 = e_1 e_2 = e_2$.
\end{proof}

We now argue that idempotents satisfying \cref{eq:left_jw_def} exist.
Indeed, the algebra $\TL_n^{\Q(\delta)}$ is semi-simple so the trivial module is projective.
Thus the idempotent $e$ such that $\TL_n^{\Q(\delta)}\cdot e$ is isomorphic to the trivial module suffices.

On the other hand, there is also a recursive formulation that explicitly constructs the idempotents:
\begin{lemma}\label{lem:recurse_jw_def}
  Note that $\JW_1 = \id_1$. 
  For $n>1$, 
  \begin{equation}\label{eq:jw_recurse}
    \JW_n = \JW_{n-1}\otimes\id_1 -\frac{[n-1]}{[n]}(\JW_{n-1}\otimes \id_1)\circ u_{n-1} \circ (\JW_{n-1}\otimes \id_1).
  \end{equation}
  In diagrams,
  \begin{equation}
    \vcenter{\hbox{
    \begin{tikzpicture}[scale=0.5]
  \draw (0,0) rectangle (2.5,2.5);
  \node at (1.25, 1.25) {$\JW_{n+1}$};
  \draw[very thick] (-0.5, 0.2) -- (0, 0.2);
  \draw[very thick] (-0.5, 2.3) -- (0, 2.3);
  \node at (-0.25, 1.5) {$\vdots$};
  \draw[very thick] (2.5, 0.2) -- (3, 0.2);
  \draw[very thick] (2.5, 2.3) -- (3, 2.3);
  \node at (2.75, 1.5) {$\vdots$};
\end{tikzpicture}

    }}
    =
    \vcenter{\hbox{
    \begin{tikzpicture}[scale=0.5]
  \draw (0,0) rectangle (2,2);
  \node at (1, 1) {$\JW_{n}$};
  \draw[very thick] (-0.5, 0.2) -- (0, 0.2);
  \draw[very thick] (-0.5, 1.7) -- (0, 1.7);
  \node at (-0.25, 1.2) {$\vdots$};
  \draw[very thick] (2.5, 0.2) -- (2, 0.2);
  \draw[very thick] (2.5, 1.7) -- (2, 1.7);
  \node at (2.25, 1.2) {$\vdots$};
  \draw[very thick] (-0.5, -0.5) -- (2.5, -0.5);
\end{tikzpicture}
    }}
    - \frac{[n-1]}{[n]}
    \vcenter{\hbox{
    \begin{tikzpicture}[scale=0.5]
  \draw (-2.75,0) rectangle (-0.75,2);
  \node at (-1.75, 1) {$\JW_n$};
  \draw (0.75,0) rectangle (2.75,2);
  \node at (1.75, 1) {$\JW_n$};
  \draw[very thick] (-3.25, -0.5) -- (-0.75, -0.5);
  \draw[very thick] (3.25, -0.5) -- (0.75, -0.5);
  \draw[very thick] (-3.25, 0.25) -- (-2.75, 0.25);
  \draw[very thick] (-3.25, 1.75) -- (-2.75, 1.75);
  \draw[very thick] (3.25, 0.25) -- (2.75, 0.25);
  \draw[very thick] (3.25, 1.75) -- (2.75, 1.75);
  \draw[very thick] (-0.75, 0.5) -- (0.75, 0.5);
  \draw[very thick] (-0.75, 1.75) -- (0.75, 1.75);
  \node at (-3.05, 1.25) {$\vdots$};
  \node at (0, 1.3) {$\vdots$};
  \node at (3.05, 1.25) {$\vdots$};
  \draw [very thick] (-0.75, -0.5) arc (-90:90:0.35);
  \draw [very thick] (0.75, -0.5) arc (270:90:0.35);
\end{tikzpicture}
}}.
  \end{equation}
\end{lemma}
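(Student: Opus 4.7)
I would prove the recursion by letting $e_n$ denote the right-hand side of \eqref{eq:jw_recurse} and verifying the hypotheses of \Cref{lem:defining_jw}: that $e_n$ is an idempotent annihilated on the left by every $u_i$, whence uniqueness forces $e_n = \JW_n$. The base case $\JW_1 = \id_1$ is immediate since $\TL_1$ contains no $u_i$'s. For $n \ge 2$ I would abbreviate $A := \JW_{n-1}\otimes\id_1$ and $u := u_{n-1}$, so that $e_n = A - \tfrac{[n-1]}{[n]}\,AuA$; note $A^2 = A$ because $\JW_{n-1}$ is idempotent.

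\textbf{Annihilation by $u_i$.} If $i \le n-2$, then $u_i$ commutes with the trailing identity strand and $u_i\JW_{n-1} = 0$, so $u_iA = 0$ and both summands of $e_n$ vanish. The case $i = n-1$ is the heart of the proof and hinges on the closure identity
\[
  u\,(\JW_{n-1}\otimes\id_1)\,u \;=\; \tfrac{[n]}{[n-1]}\,(\JW_{n-2}\otimes\id_2)\,u.
\]
Diagrammatically, the outer caps and cups of $u$ together with the trailing $\id_1$ strand close off strand $n-1$ of the middle $\JW_{n-1}$, so the sandwich realises the partial trace $\operatorname{tr}_{n-1}(\JW_{n-1})$ on the first $n-2$ strands, combined with a cap-cup on positions $n-1,n$. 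The partial-trace formula $\operatorname{tr}_{n-1}(\JW_{n-1}) = \tfrac{[n]}{[n-1]}\JW_{n-2}$ itself follows from uniqueness of $\JW_{n-2}$: commuting any $u_j$ with $j \le n-3$ through the partial trace shows $\operatorname{tr}_{n-1}(\JW_{n-1})$ is annihilated by all such $u_j$, hence lies in the line spanned by $\JW_{n-2}$, and the scalar is pinned down by a standard Markov trace computation.

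\textbf{Completing the proof.} Given the closure identity, $uAuA = \tfrac{[n]}{[n-1]}(\JW_{n-2}\otimes\id_2)uA$. Because $\JW_{n-2}\otimes\id_2$ and $u$ act on disjoint strand ranges they commute, and the standard absorption $(\JW_{n-2}\otimes\id_1)\JW_{n-1} = \JW_{n-1}$ --- a direct consequence of $\JW_{n-1}$ being annihilated by $I_{n-1}$ on either side --- collapses $(\JW_{n-2}\otimes\id_2)A$ to $A$. Hence $uAuA = \tfrac{[n]}{[n-1]}uA$, giving $ue_n = 0$. For idempotence, let $B = AuA$: then $AB = BA = B$ by $A^2 = A$, and the same computation yields $B^2 = A(uAu)A = \tfrac{[n]}{[n-1]}B$. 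Expanding, $e_n^2 = A - 2\tfrac{[n-1]}{[n]}B + \tfrac{[n-1]^2}{[n]^2}\cdot\tfrac{[n]}{[n-1]}B = A - \tfrac{[n-1]}{[n]}B = e_n$, and \Cref{lem:defining_jw} concludes $e_n = \JW_n$.

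\textbf{Main obstacle.} The entire argument apart from the closure identity is bookkeeping driven by the absorption property of $\JW_{n-1}$. The real substance is the partial-trace scalar $\tfrac{[n]}{[n-1]}$; the challenge is pinning it down without circular appeal to the recursion being proved. Isolating the partial-trace formula and characterising it via $\JW_{n-2}$'s uniqueness, rather than expanding $\JW_{n-1}$ through the recursion, keeps the argument clean.
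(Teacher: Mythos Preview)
The paper does not supply a proof of this lemma; it records the Wenzl recursion as a classical fact (followed immediately by the example of $\JW_3$ and Morrison's reformulation), so there is no paper argument to compare against.

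Your approach is correct and is the standard verification: show the right-hand side is an idempotent killed by every $u_i$ and invoke \Cref{lem:defining_jw}. The one point I would tighten is the determination of the scalar in the partial-trace identity $\tau(\JW_{n-1}) = \tfrac{[n]}{[n-1]}\JW_{n-2}$. You say it is ``pinned down by a standard Markov trace computation'', but the usual computation of the full Markov trace $\tau^m(\JW_m) = [m+1]$ itself rests on the recursion, so this risks exactly the circularity you flag. The clean, genuinely non-circular route is induction on $n$: assume the recursion already holds for $\JW_{n-1}$ (this is the inductive hypothesis, not the statement being proved) and compute $\tau(\JW_{n-1})$ directly from that formula. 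Tracing the last strand of $\JW_{n-2}\otimes\id_1$ gives $\delta\,\JW_{n-2}$, while tracing the last strand of $(\JW_{n-2}\otimes\id_1)\,u_{n-2}\,(\JW_{n-2}\otimes\id_1)$ straightens the cup--cap to yield $\JW_{n-2}$; hence
\[
\tau(\JW_{n-1}) \;=\; \Big(\delta - \tfrac{[n-2]}{[n-1]}\Big)\JW_{n-2} \;=\; \tfrac{[n]}{[n-1]}\,\JW_{n-2}.
\]
With this in hand your bookkeeping for $ue_n = 0$ and $e_n^2 = e_n$ goes through verbatim.
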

For example,
\begin{equation}\label{eq:jw3}
  \vcenter{\hbox{
    \begin{tikzpicture}[scale=0.5]
  \draw (0,0) rectangle (2,2);
  \node at (1, 1) {$\JW_3$};
  \draw[very thick] (-0.5, 1.6) -- (0, 1.6);
  \draw[very thick] (-0.5, 1.0) -- (0, 1.0);
  \draw[very thick] (-0.5, 0.4) -- (0, 0.4);
  \draw[very thick] (2.5, 1.6) -- (2, 1.6);
  \draw[very thick] (2.5, 1.0) -- (2, 1.0);
  \draw[very thick] (2.5, 0.4) -- (2, 0.4);
\end{tikzpicture}

  }}
   = 
  \vcenter{\hbox{
    \begin{tikzpicture}[scale=0.5]
  \draw[very thick] (1, 1.6) -- (0, 1.6);
  \draw[very thick] (1, 1.0) -- (0, 1.0);
  \draw[very thick] (1, 0.4) -- (0, 0.4);
\end{tikzpicture}

  }}
  -
  \frac{[2]}{[3]}
  \left(
  \vcenter{\hbox{
    \begin{tikzpicture}[scale=0.5]
  \draw[very thick] (1, 1.6) -- (0, 1.6);
  \draw[very thick] (1, 0.4) arc (270:90:0.3);
  \draw[very thick] (0, 1.0) arc (90:-90:0.3);
\end{tikzpicture}

  }}
  +
  \vcenter{\hbox{
    \begin{tikzpicture}[scale=0.5]
  \draw[very thick] (1, 0.4) -- (0, 0.4);
  \draw[very thick] (1, 1.0) arc (270:90:0.3);
  \draw[very thick] (0, 1.6) arc (90:-90:0.3);
\end{tikzpicture}

  }}
  \right)
  + \frac{1}{[3]}
  \left(
  \vcenter{\hbox{
    \begin{tikzpicture}[scale=0.5]
  \draw[very thick] (0, 0.4) to[out=0,in=180] (1.0, 1.6);
  \draw[very thick] (0, 1.6) arc (90:-90:0.3);
  \draw[very thick] (1.0, 0.4) arc (270:90:0.3);
\end{tikzpicture}

  }}
  +
  \vcenter{\hbox{
    \begin{tikzpicture}[scale=0.5]
  \draw[very thick] (0, 1.6) to[out=0,in=180] (1.0, 0.4);
  \draw[very thick] (0, 1.0) arc (90:-90:0.3);
  \draw[very thick] (1.0, 1.0) arc (270:90:0.3);
\end{tikzpicture}

  }}
  \right).
\end{equation}
An equivalent formulation is given by Morrison as follows:
\begin{lemma}\cite[4.1]{morrison_2014}
  Suppose $D$ is a diagram $\underline{n+1} \to \underline{n+1}$.  Let $\hat{D}$ be the diagram $\underline{n+2} \to \underline{n}$ formed by folding across the lowest target site of $D$.  Let $\{i\}$ be the set of positions of simple caps in $\hat{D}$ and $D_i\in \TL_n$ the diagrams obtained by removing those caps.  Then
  \begin{equation}
    \underset{\in \JW_{n+1}}{\textnormal{coeff}}(D) = \sum_{\{i\}} \frac{[i]}{[n+1]} \,\underset{\in \JW_n}{\textnormal{coeff}}\,(D_i)
  \end{equation}
\end{lemma}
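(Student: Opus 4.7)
The plan is to induct on $n$, with the Wenzl recursion from \Cref{lem:recurse_jw_def} doing the real work. The base case $n=0$ is immediate: since $\JW_1 = \id_1$, the only diagram is $\id_1$, whose fold $\hat{\id_1}$ consists of a single simple cap at position $1$, and the formula evaluates to $\frac{[1]}{[1]}\cdot 1 = 1$.

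For the inductive step, one uses
\[
  \JW_{n+1} = (\JW_n \otimes \id_1) - \frac{[n]}{[n+1]}(\JW_n \otimes \id_1) \circ u_n \circ (\JW_n \otimes \id_1)
\]
to split the coefficient of $D \in \TL_{n+1}$ into contributions from each term. The first term $\JW_n \otimes \id_1$ contributes $\textnormal{coeff}_{\JW_n}(D')$ precisely when $D = D' \otimes \id_1$ has a through rightmost strand; this matches the ``new'' simple cap at position $n+1$ of $\hat{D}$ produced by bending the through-strand up, since removing it recovers $D'$ and the weight is $\frac{[n+1]}{[n+1]} = 1$. So this term accounts for the $i=n+1$ summand of Morrison's formula.

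The second term must account for the remaining simple caps of $\hat{D}$, those at positions $i \le n$ which are pre-existing simple caps of $D$ itself. The plan here is to expand both copies of $\JW_n$ in the diagram basis: the cap-cup of $u_n$ forces the rightmost two top and rightmost two bottom sites to be paired in every diagram of the expansion, creating closed loops whose evaluation is controlled by the partial-trace identity $\textnormal{ptr}(\JW_n) = \frac{[n+1]}{[n]}\JW_{n-1}$. Combining this with the inductive hypothesis on the $\JW_n$-coefficients of the smaller diagrams $D_i$ and simplifying the quantum-integer factors (the $\frac{[n]}{[n+1]}$ prefactor of the recursion interacts with $\frac{[i]}{[n]}$ from the hypothesis and a $\frac{[n+1]}{[n]}$ from the trace) rearranges the contribution into the remaining $i \le n$ terms of Morrison's sum.

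The delicate part is this combinatorial bookkeeping inside the second term: matching each simple cap at position $i \le n$ of $\hat{D}$ to a specific decomposition of $D$ through $(\JW_n \otimes \id_1) u_n (\JW_n \otimes \id_1)$, and verifying that the weights line up exactly. An alternative route that side-steps the direct computation is to define $\tilde{\JW}_{n+1}$ by the proposed right-hand side and instead verify the two hypotheses of \Cref{lem:defining_jw}: the identity coefficient equals $1$ (immediate from the inductive base, since $\hat{\id}_{n+1}$ has a unique simple cap at position $n+1$) and $u_j \tilde{\JW}_{n+1}=0$ for each $j$. The obstacle in that approach shifts to the annihilation property, which should reduce to a sign-preserving bijection on pairs of simple caps of the folded diagrams exchanged by the local action of $u_j$, a cancellation in the spirit of the standard proof of the original Wenzl recursion.
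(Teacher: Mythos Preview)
The paper does not prove this lemma; it is quoted from~\cite{morrison_2014} without argument, so there is no in-paper proof to compare against.

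Your primary route---induction on $n$ via the Wenzl recursion of \Cref{lem:recurse_jw_def}, matching the $\JW_n\otimes\id_1$ term with the $i=n+1$ summand and extracting the remaining $i\le n$ summands from the second term---is exactly Morrison's own argument and is correct in outline. The ``delicate bookkeeping'' you flag is genuinely the heart of the proof but does go through once one tracks how the cap--cup of $u_n$ interacts with the folding.

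On your alternative route via \Cref{lem:defining_jw}: note that the lemma takes idempotency of $e$ as a \emph{hypothesis}, not identity-coefficient $1$, so as stated you cannot invoke it directly. You would first need the (easy) observation that identity-coefficient $1$ together with $u_j\cdot e=0$ for all $j$ already forces $e^2=e$, since $e-\id\in I_n=\sum_j\TL_n\,u_j$ and hence $(e-\id)\cdot e=0$. With that patched, the route is viable.
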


There are other recursive relations both for the morphisms and their coefficients in various bases~\cite{sentinelli_2019} and expansions involving more or fewer copies of $\JW_n$ in the second term (so-called ``single'' and ``triple'' clasp formulae).
The authors are not aware of any formula for the coefficients of diagrams in the Jones-Wenzl idempotents in the semi-simple case that is not inherently recurrent.

\subsection{Characteristic Zero}
If we now specialise to a characteristic zero pointed field $(k, \overline\delta)$ where $\overline\delta$ satisfies  $[\ell]$ but no $[m]$ for $0 < m < \ell$, the Temperley-Lieb algebras are no longer semi-simple for all $n$.
In this case the trivial module is not, in general, projective, which means that the Jones-Wenzl elements ``do not exist''.

To be precise, let $m(\delta) \in \Z[\delta]$ be the minimal polynomial of $\overline\delta$ over the integers and $\mathfrak{p}$ the prime ideal of $\Q[\delta]$ generated by $m(\delta)$.
The element $\JW_n$ lies in $\TL_n^{\Q(\delta)}$.
We construct both the ``integer form'' of $\TL_n$ over $\Q[\delta]_\mathfrak{p}$ and the algebra of interest which is defined over the characteristic zero ``target'' field $\Q[\delta]_\mathfrak{p} / \mathfrak{p}_\mathfrak{p} \subseteq k$.
We can summarise these rings as:
\begin{center}
  \begin{tikzcd}
    \Q[\delta]_{\mathfrak{p}} \arrow[r,hook,"i"] \arrow[d,twoheadrightarrow]& \Q(\delta)\\
    \Q[\delta]_{\mathfrak{p}} / \mathfrak{p}_\mathfrak{p} \arrow[r,hook]&k
  \end{tikzcd}
\end{center}
If $n \ge \ell$ and $n \not \equiv_\ell -1$ then in the basis of diagrams, the Jones-Wenzl idempotent in $\TL_n^{\Q(\delta)}$ does not lie in the image of $i$ and thus cannot descend to an element of $\TL^k_n$.
Indeed if it did, then the trivial module would be projective which is not the case~\cite[8.1]{ridout_saint_aubin_2014}.

We can rephrase as follows.
If $n\ge \ell$ and $n \not \equiv_\ell -1$, then the Jones-Wenzl idempotent $\JW_n$ written in terms of diagrams cannot be put over a denominator not divisible by $m(\delta)$.

For example, observe that \cref{eq:jw3} makes no sense if $\overline{\delta} = \pm 1$ so that $[3] = 0$.
However, if $n \equiv_3 -1$ then indeed the equation makes sense.  As an example, set $\ell = 2$ so $\delta = 0$ and then \cref{eq:jw3} simplifies to the sum of three diagrams.

\vspace{1em}
If we have the particular case $n = \ell -1$ then Graham and Lehrer found an elegant ``non-recursive'' form for the coefficients of the diagrams in $\JW_n$.
Recall from~\cite{spencer_2020_modular} the definition of the ``candidate morphisms''
\begin{equation}\label{eq:hoop_element}
  v_{r,s} = \sum_{x} h_{F(x)} x
\end{equation}
and the subsequent proposition
\begin{proposition}\label{prop:jwform}\cite[3.6]{graham_lehrer_1998}
  If $s < r < s + 2\ell$ and $s + r \equiv_{\ell} -2$ then the map $S(n,r) \to S(n,s)$ given by $x \mapsto x \circ v_{r,s}$ is a morphism of $\TL_n$ modules for every $n$.
\end{proposition}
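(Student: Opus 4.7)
The plan is to show that the map $\phi \colon x \mapsto x \circ v_{r,s}$ is (a) well-defined on the quotient modules $S(n,r) \to S(n,s)$, and (b) intertwines the left $\TL_n$-action. Part (b) is immediate from associativity of composition in $\TLcat$: for $a \in \TL_n$ and $x \in \Hom_{\TLcat}(\underline{n},\underline{r})$ one has $(a \cdot x) \circ v_{r,s} = a \cdot (x \circ v_{r,s})$. The content of the proposition is therefore part (a).

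For (a), a diagram $x \in \Hom_{\TLcat}(\underline{n},\underline{r})$ represents zero in $S(n,r)$ iff it has fewer than $r$ through-strands, and similarly for $S(n,s)$. Any such $x$ possesses at least one adjacent pair of vertices on its $\underline{r}$-boundary joined by a cup, and so factors as $x = y \circ u_i$ for some $y \in \Hom_{\TLcat}(\underline{n}, \underline{r})$ and generator $u_i \in \TL_r$. Since the through-strand count of a composite cannot exceed that of either factor, it suffices to prove that $u_i \circ v_{r,s}$ has fewer than $s$ through-strands for every $1 \le i < r$. Equivalently, the image of $v_{r,s}$ in $S(r,s)$ is $\TL_r$-invariant, so that $v_{r,s}$ defines a morphism from the trivial module $S(r,r)$ into $S(r,s)$.

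The core technical step is therefore the identity $u_i \cdot v_{r,s} = 0$ in $S(r,s)$ for each $i$. Unpacking the formula $v_{r,s} = \sum_x h_{F(x)} x$, I would analyse how $u_i$ acts on each summand $x$ according to the local structure at positions $i, i+1$ of the $\underline{r}$-boundary: the composite $u_i \circ x$ may close a loop (yielding a scalar $\delta$), merge two caps, or convert a cap and a through-strand into a new local configuration. The contributions are then regrouped by image diagram and shown to vanish using quantum-number identities. The basic recursion $\delta[k] = [k-1] + [k+1]$ collapses local sums, while the congruence $s + r \equiv_\ell -2$ supplies the global identities amongst quantum numbers indexed around multiples of $\ell$ that force the remaining sums to zero. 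The hypothesis $s < r < s + 2\ell$ guarantees that every quantum number appearing in a denominator of a coefficient $h_{F(x)}$ is nonzero, so that $v_{r,s}$ itself is well-defined.

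The principal obstacle is the combinatorial bookkeeping of this cancellation: tracking how the half-diagrams $x$ and their $F$-labels are permuted and merged under the action of $u_i$, and matching the resulting coefficient sums precisely to the available quantum-number identities, demands a careful case analysis on the shape of $x$ near site $i$. Once $u_i \cdot v_{r,s} = 0$ in $S(r,s)$ has been established, well-definedness of $\phi$ follows at once from the reduction above, and the proposition is complete.
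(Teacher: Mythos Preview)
The paper does not supply a proof of this proposition: it is quoted verbatim from Graham--Lehrer~\cite[3.6]{graham_lehrer_1998} and used as a black box (the paper only applies the special case $s=0$, $r=2\ell-2$ to extract the closed form for $\JW_{\ell-1}$). There is therefore no ``paper's own proof'' to compare against.

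Your outline is in line with the Graham--Lehrer argument: reduce well-definedness to the assertion that $v_{r,s}$ spans a trivial $\TL_r$-submodule of $S(r,s)$, then verify $u_i\cdot v_{r,s}=0$ in $S(r,s)$ by a local case analysis on the half-diagrams $x$ near site $i$, with the cancellations governed by the recursion $[k-1]+[k+1]=\delta[k]$ and the congruence $r+s\equiv_\ell -2$. Two small points. First, your factorisation ``$x=y\circ u_i$ with $y\in\Hom_{\TLcat}(\underline{n},\underline{r})$'' is not literally available in general (e.g.\ when $\delta=0$); the clean statement is that a diagram with fewer than $r$ through-strands factors through the cup map $\underline{r-2}\to\underline{r}$, and one then checks that capping $v_{r,s}$ at any adjacent pair of sites drops the through-degree below $s$, which is equivalent to $u_i\cdot v_{r,s}=0$ in $S(r,s)$. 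Second, your remark about denominators is slightly off: the coefficients $h_{F(x)}$ in \cref{eq:hoop_element} are products of quantum integers, not ratios, so no invertibility hypothesis is needed for $v_{r,s}$ to be defined; the inequality $s<r<s+2\ell$ instead controls which quantum-number identities are available for the cancellation. Apart from these cosmetic issues, your plan is correct and matches the source.
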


The immediate corollary (from setting $s = 0$ and $r = 2\ell-2$) is that the Jones-Wenzl idempotent $\JW_{\ell-1}$ exists over $k$ and it's diagram coefficients can be found by ``rotating the target sites'' and then computing the hook-formula $h_{F(x)}$.


\subsection{Positive Characteristic}
Let us now focus on the positive characteristic case.
As before, if no quantum number vanishes, we are in a semi-simple case and \cref{eq:jw_recurse} gives us all $\JW_n$.
Thus assume that we are working over a pointed field $(k, \overline{\delta})$ of characteristic $p$ and that $\ell$ is the least non-negative such that $[\ell]$ is satisfied by $\bar\delta \in k$.
Thus we say that we are under $(\ell, p)$-torsion.

Let $\overline{m}(\delta) \in \F_p[\delta]$ be the minimal polynomial satisfied by $\overline{\delta}$ and let $m(\delta)$ be a preimage in $\Z[\delta]$.
Then $\mathfrak{m} = (p, m(\delta))$ is a maximal ideal in $\Z[\delta]$.
Consider $S=\Z[\delta]_\mathfrak{m}$, a local noetherian domain with maximal ideal $\mathfrak{m}S$.
Now $m(x) \not\in (p)$ so, $(0) \subset (p)\subset \mathfrak{m}$ strictly and $S$ is regular of Krull dimension 2.
As such, its completion with respect to $\mathfrak{m}$, which we will call $R$ is regular and hence a domain.
Set $F$ to be the field of fractions of $R$.
This is a characteristic zero field containing $\Q(\delta)$.
\begin{center}
  \begin{tikzcd}
    R=\widehat{\Z[\delta]_\mathfrak{m}} \arrow[r,hook] \arrow[d,twoheadrightarrow]& F& \Q(\delta)\arrow[l,hook]\\
    R/\mathfrak{m}R \arrow[r,hook]&k
  \end{tikzcd}
\end{center}

What we have now is a ``$(\ell,p)$-modular system'' in that we have a triple $(F,R,k)$ such that $F$ is a characteristic zero field, which is the field of fractions of $R$, a complete local domain with residue field $k$.
Thus, any idempotent in an algebra defined over $k$ can be raised to an idempotent over $R$ and then injected in to one defined over $F$.

We would like to know if this is reversible.
That is, given an idempotent defined over $\Q(\delta)$, we consider it as an idempotent over $F$ and ask if it lies in the algebra over $R$.
If so, we may reduce it modulo the maximal ideal to find an idempotent over $k$.

In plain terms, we wish to know if the coefficients of the diagrams in $\JW_n$ can be written without denominators divisible by $p$ or $m(\delta)$.
If so, the idempotent ``exists'' in our field of positive characteristic.

Should an element $e$ satisfying \cref{eq:left_jw_def} exist over $k$, it is clear that $\TL_n^k \cdot e$ is a trivial module and so the trivial module is projective.
Thus we can raise the idempotent $e$ to an element of $\TL_n^{R}$ where action by $u_i$ sends the element to something divisible by $\mathfrak{m}^r$ for every $r$ and hence equal to zero.
That is to say, it lifts to $\JW_n$.

Thus we may consider an alternative defining property of the Jones-Wenzl element that it is the idempotent that generates the trivial module's cover, which is equal to the trivial module, whenever that is the case.

As such, the results of~\cite{spencer_2020_modular} (in particular Theorem 3.4 combined with 8.3) can be interpreted as follows:
\begin{theorem}\label{thm:when_jw}
  The Jones-Wenzl idempotent $\JW_n$ in $\TL_n^{\Q(\delta)}$ descends to an element of $\TL_n^k$ iff $n<\ell$ or $n < \ell p$ and $n\equiv_\ell -1$ or $n = a\ell p^{k}-1$ for some $k\ge 1$ and $1\le a < p$.
\end{theorem}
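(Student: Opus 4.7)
The approach is to translate the existence of $\JW_n$ over $k$ into a projectivity statement for the trivial module $S(n,n)$, and then appeal to the classification from \cite{spencer_2020_modular}. First I would establish the equivalence: $\JW_n \in \TL_n^{\Q(\delta)}$ descends to $\TL_n^k$ if and only if $S(n,n)$ is projective as a $\TL_n^k$-module. The ``only if'' direction is essentially the observation preceding the theorem: the reduction mod $\mathfrak{m}$ of $\JW_n$ generates a trivial summand. For the ``if'' direction, projectivity of $S(n,n)$ furnishes a primitive idempotent $e \in \TL_n^k$ with $\TL_n^k \cdot e \cong S(n,n)$; by \cref{lem:defining_jw} such an $e$ satisfies \cref{eq:left_jw_def} and is unique. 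Standard idempotent lifting across the complete local ring $R$ then yields $\tilde e \in \TL_n^R$ whose image in $\TL_n^F$ still satisfies \cref{eq:left_jw_def}, forcing $\tilde e = \JW_n$ by uniqueness over $F \supseteq \Q(\delta)$; in particular $\JW_n$ has coefficients in $R$ and descends to $k$.

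Next I would invoke the cited results directly. \cite[Thm.~3.4]{spencer_2020_modular} describes the blocks of $\TL_n^k$ in terms of an $(\ell,p)$-reflection action on the labels of cell modules, and \cite[\S8]{spencer_2020_modular} identifies the cell filtration of each indecomposable projective. Combining these, the projective cover of $S(n,n)$ has a cell filtration indexed by those $m \ge n$ of the correct parity lying in the $(\ell,p)$-block of $n$. Hence $S(n,n)$ is projective if and only if $n$ is maximal (in its parity class) in its $(\ell,p)$-block, i.e.\ no strictly larger $m$ is $(\ell,p)$-linked to $n$.

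The remaining work is to decode ``block-maximal'' as an explicit condition on $n$. In digit language, the $(\ell,p)$-reflections move $n$ upward by flipping a trailing unsaturated digit; block-maximality therefore corresponds to the $(\ell, p)$-expansion of $n+1$ having a single nonzero entry in some position $i \ge 0$, subject to the per-position bound. Unpacking position by position produces exactly the three cases: for $i = 0$, $n + 1 \le \ell$ giving case (i); for $i = 1$, $n + 1 = j\ell$ with $1 \le j \le p$, which is $n \equiv_\ell -1$ and $n < \ell p$, giving case (ii); and for $i = k + 1 \ge 2$, $n + 1 = a \ell p^k$ with $1 \le a < p$, giving case (iii).

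The main obstacle is the third case. The first two are essentially classical and can be checked by direct inspection of the block structure, but case (iii) requires the full recursive $(\ell,p)$-digit bookkeeping of \cite{spencer_2020_modular} to verify that precisely the digit patterns $(\ell-1,\, p-1,\, \ldots,\, p-1,\, a-1,\, 0,\, \ldots)$ for $n$ — with $k-1$ intermediate $p-1$'s — are those for which every available reflection strictly decreases the value. Once this is in hand, the equivalence ``block-maximal $\iff$ listed digit pattern'' is purely combinatorial, and the theorem follows.
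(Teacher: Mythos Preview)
Your approach is essentially the paper's own: the paper states the theorem as a direct reinterpretation of \cite[Thm.~3.4 and 8.3]{spencer_2020_modular}, after establishing in the preceding paragraphs the equivalence ``$\JW_n$ descends to $k$ $\iff$ the trivial module $S(n,n)$ is projective'' via idempotent lifting through the modular system $(F,R,k)$. Your first step reproduces that argument, and your final step (unpacking the digit condition on $n+1$ into the three listed cases) is correct.

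There is, however, a genuine slip in your middle step. The cell factors of $P(n,n)$ in $\TL_n^k$ are the $S(n,m)$ for $m \in I_n$, and every such $m$ satisfies $m \le n$ (the through-strand count cannot exceed $n$); the inequality is reversed from what you wrote. More importantly, the condition for projectivity is not ``$n$ is maximal in its $(\ell,p)$-block'': $n$ is \emph{always} the maximum of $I_n$, and there can be larger $m$ linked to $n$ even when $S(n,n)$ is projective. For instance with $\ell=2$, $p=3$, $n=3$ one has $I_3=\{3\}$ so $n=3$ is Adam, yet $3 \in I_7 = \{7,3\}$, so $3$ is not block-maximal in your sense. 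The correct condition is simply $|I_n|=1$, i.e.\ $n$ is $(\ell,p)$-Adam, which by definition means the $(\ell,p)$-expansion of $n+1$ has a single nonzero digit. With that correction in place, your argument goes through and matches the paper's.
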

An equivalent statement is
\begin{corollary}\label{thm:when_jw_inv}
  The Jones-Wenzl idempotent $\JW_n$ descends to an element of  $\TL^k_n(\delta)$ iff the quantum binomials $\gaussianquant n r$ are invertible in $k$ for all $0\le r\le n$.
\end{corollary}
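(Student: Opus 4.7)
The plan is to prove the corollary by showing that the condition ``every $\gaussianquant{n}{r}$ with $0 \le r \le n$ is invertible in $k$'' picks out exactly the integers $n$ listed in~\cref{thm:when_jw}. The technical heart is an $(\ell, p)$-Lucas theorem for quantum binomials: if $n = \sum_i n_i p^{(i)}$ and $r = \sum_i r_i p^{(i)}$ are the respective $(\ell, p)$-expansions, then $\gaussianquant{n}{r}$ is invertible in $k$ iff $r_i \le n_i$ for every digit index~$i$. This reflects the vanishing pattern of quantum numbers in the $(\ell,p)$-modular setting --- $[m] = 0$ in $k$ exactly when $\ell \mid m$ in the characteristic-zero subfield, compounded with the ordinary Lucas theorem mod $p$ for the higher digits. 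I would extract this from~\cite{spencer_2020_modular} or give a short direct derivation using the standard factorisations of $[a\ell + b]$ under $(\ell,p)$-torsion.

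With this established, the corollary's condition translates to the purely combinatorial statement that every $r \in [0, n]$ is ``digit-legal'' with respect to $n$. Since the digit assembly $(r_0, r_1, \ldots) \mapsto \sum_i r_i p^{(i)}$ is an injection $\prod_i [0, n_i] \hookrightarrow [0, n]$, the condition is equivalent to the cardinality identity
\[
  \prod_i (n_i + 1) \;=\; n + 1.
\]

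Finally, I would classify the solutions of this identity. Let $b$ be the top index with $n_b > 0$. A short induction on $b$, with base cases $b = 0, 1, 2$ checked by direct expansion, shows the identity forces $n_0 = \ell - 1$ and $n_i = p - 1$ for all $1 \le i < b$, while $n_b$ is arbitrary in $[1, p)$ (respectively $n_0$ arbitrary in $[0, \ell)$ when $b = 0$). Telescoping yields $n = n_0$ when $b = 0$ (case (i) of~\cref{thm:when_jw}), $n = (n_1 + 1)\ell - 1 \in \{2\ell - 1, \ldots, p\ell - 1\}$ when $b = 1$ (case (ii)), and $n = n_b \ell p^{b-1} - 1$ when $b \ge 2$, i.e.\ case (iii) with $a = n_b$ and $k = b - 1$. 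These three digit-patterns match~\cref{thm:when_jw} exactly, giving the desired equivalence.

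The main obstacle is the $(\ell,p)$-quantum Lucas factorisation: it is the only non-elementary ingredient, and its proof requires careful handling of how $[m]$ splits according to $m$'s position relative to multiples of $\ell$ and of $\ell p^j$. Once that is in hand, the rest reduces to the elementary counting argument above.
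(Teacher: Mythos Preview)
Your approach is correct in outline and aligns with what the paper leaves implicit. The paper does not write out a proof of \cref{thm:when_jw_inv}: it presents the corollary as an ``equivalent statement'' to \cref{thm:when_jw} and cites Webster (appendix to \cite{elias_libedinsky_2017}; see also \cite{webster_2014}) for an independent proof via $U_q(\mathfrak{sl}_2)$-representations, remarking that the route through \cref{thm:when_jw} and \cite{spencer_2020_modular} is the only argument avoiding Schur--Weyl duality known to the authors. Your proposal spells out precisely the combinatorial bridge the paper omits --- the $(\ell,p)$-quantum Lucas theorem together with the counting identity $\prod_i(n_i+1)=n+1$ --- and so is more self-contained than either the paper's treatment or the cited proofs.

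One arithmetic slip to fix: in the $b\ge 2$ case your telescoping should give
\[
  n \;=\; (p^{(b)}-1) + n_b\,p^{(b)} \;=\; (n_b+1)\,\ell p^{\,b-1}-1,
\]
not $n_b\,\ell p^{\,b-1}-1$. Consequently the match with case~(iii) of \cref{thm:when_jw} is $a=n_b+1$, $k=b-1$, and since $n_b\in\{1,\ldots,p-1\}$ this yields $a\in\{2,\ldots,p\}$; the boundary value $n_b=p-1$ must be recast as $a'=1$, $k'=b$ to fit the stated range $1\le a<p$. With that correction the three digit patterns coincide exactly with the cases of \cref{thm:when_jw} and your argument goes through.
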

This has been shown by Webster in the appendix of~\cite{elias_libedinsky_2017}.
However the proof there, and the alternative provided in~\cite{webster_2014} both construct linear maps of representations of $U_q(\mathfrak{sl}_2)$ which are shown to be morphisms (and hence elements of $\TL_n$).
This is the only proof not to use the Schur-Weyl duality of which the authors are aware.

Regrettably, the authors are not aware of any formula in the style of \cref{eq:hoop_element} giving the form of these idempotents in generality.
However, we are able to illuminate one further case constructively.

Given $\ell$ and $p$, we write $p^{(i)} = \ell p^{i-1}$ for all $i>0$ and $p^{(0)} = 1$.
We will explicitly construct $\JW_{2p^{(r)}-1}$.
Key in the below is that the construction of $\JW_{p^{(r)}-1}$ as an ``unfolding'' of a trivial submodule of $S(2p^{(r)}-2,0)$ shows that the first trace vanishes (it corresponds by the action of $u_{p^{(r)}-1}$ on that module).
\begin{proposition}\label{prop:construct_2}
  The element $\JW_{2p^{(r)} -1}$ exists in $\TL^k_{2p^{(r)}-1}$ for each $r \ge 1$.
\end{proposition}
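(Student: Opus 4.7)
The plan is to construct $\JW_{2n+1}$ explicitly where $n = p^{(r)}-1$, and to identify the candidate with the Jones-Wenzl idempotent via \Cref{lem:defining_jw}. First I would record that $\JW_n$ itself already exists in $\TL_n^k$: by \Cref{thm:when_jw}, either $r=1$ (so $n=\ell-1<\ell$) or $r\ge 2$ (so $n=\ell p^{r-1}-1$ is of the form $a\ell p^k-1$ with $a=1$ and $k=r-1\ge 1$). Hence $\JW_n$ is available as a building block.

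The starting candidate is
\[
  E \;=\; \JW_n \otimes \id_1 \otimes \JW_n \;\in\; \TL_{2n+1}^k.
\]
Because $u_i \JW_n = 0$ for $1\le i<n$, one has $u_i E = 0$ automatically for $i\in\{1,\dots,n-1\}\cup\{n+2,\dots,2n\}$. The only obstructions come from $u_n$ and $u_{n+1}$, which each touch the central $\id_1$ strand. These I would kill by subtracting a correction of the shape $e = E - E\cdot z\cdot E$, where $z$ is a carefully chosen linear combination of words in $u_n,u_{n+1}$ (morally a miniature $\JW_3$ on the three middle strands). Because $z$ lies in the two-sided ideal generated by $\{u_n,u_{n+1}\}$, the coefficient of the identity diagram in $e$ remains $1$.

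The crucial input, signposted in the paragraph immediately preceding the proposition, is that the partial trace (the ``first trace'') of $\JW_n$ vanishes in $\TL_{n-1}^k$: since $\JW_n$ is the unfolding of a generator of a trivial submodule of $S(2p^{(r)}-2,0)$ killed by $u_{p^{(r)}-1}=u_n$, this trace is zero. This is the precise miracle needed. Although $[n+1]=[p^{(r)}]$ vanishes in $k$, so the naive semi-simple relation $\JW_{n+1} = (\JW_n\otimes\id_1) - ([n]/[n+1])(\JW_n\otimes\id_1)u_n(\JW_n\otimes\id_1)$ is indeterminate, the vanishing of the partial trace forces the potential pole to cancel: the offending terms produced by $u_n E$ and $u_{n+1} E$ are themselves annihilated by the trace condition. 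The surviving correction $z$ requires only inversion of $[2n+1]=[2p^{(r)}-1]$, which is a unit of $k$ since $\gcd(2p^{(r)}-1,\ell)=1$.

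The hardest step will be the diagrammatic bookkeeping: writing down $z$ explicitly and verifying $u_n e = 0$ using the vanishing partial trace to see that the would-be $1/[n+1]$ denominator in the computation is cancelled. The cellular involution $\iota$ of \Cref{lem:defining_jw} halves the work by reducing $u_{n+1} e=0$ to the dual of $u_n e=0$. Once $u_i e=0$ for every $i$ and the identity coefficient of $e$ is $1$, \Cref{lem:defining_jw} yields idempotence and the identification $e=\JW_{2n+1}$.
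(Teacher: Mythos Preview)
Your plan shares the two essential ingredients with the paper's argument --- the vanishing of the single-strand trace of $\JW_{p^{(r)}-1}$ and the appeal to \Cref{lem:defining_jw} once $u_ie=0$ is known --- but the execution is quite different, and the step you flag as ``hardest'' is genuinely where the argument lives.

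The paper does not look for a correction $z$ supported on the middle three strands. Instead it writes down a closed formula
\[
  e \;=\; \sum_{i=-(p^{(r)}-1)}^{\,p^{(r)}-1} (-1)^i\, \iota(K_i)\otimes\id_1\otimes J_i,
\]
where $J_i$ is $\JW_{p^{(r)}-1}$ with $|i|$ strands rotated from one side to the other and $K_i$ is its top--bottom reflection. Because any cap or cup applied to such a rotated Jones--Wenzl vanishes (this is exactly the trace input), applying $u_j$ to the sum kills every summand except the two in which the cap of $u_j$ lands on the floating $\id_1$ strand; those two survivors coincide up to sign and cancel. No denominator appears anywhere.

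Your ansatz $e = E - EzE$ with $z$ in the span of $u_n,u_{n+1},u_nu_{n+1},u_{n+1}u_n$ is more restrictive than it looks. The vanishing trace does give $u_nEu_n = u_{n+1}Eu_{n+1}=0$, but $E\,\TL_{2n+1}\,E$ is not five-dimensional: diagrams with a simple arc at $(n,n{+}1)$ nested inside arcs $(n{-}1,n{+}2)$, $(n{-}2,n{+}3),\ldots$ are \emph{not} killed by $E$, since the outer arcs tie the top $\JW_n$ to the bottom $\JW_n$ rather than capping either one. The paper's formula has exactly such deeply nested contributions for $|i|\ge 2$, so it is not evident that $\JW_{2n+1}-E$ lies in your four-dimensional correction space, and the ``miniature $\JW_3$'' heuristic does not supply the missing terms. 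Your claim that only $[2n+1]^{-1}$ is needed is likewise unmotivated --- the paper's construction is denominator-free. One minor point: the symmetry that swaps $u_n$ with $u_{n+1}$ in $\TL_{2n+1}$ is the top--bottom reflection, not the cellular involution $\iota$ (which only trades $u_ie=0$ for $eu_i=0$).
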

\begin{proof}
  We construct an element of $\TL_{2p^{(r)}-1}$ for which the identity diagram appears with coefficient 1 and which is killed by all cups and caps.
  It is clear that this element then satisfies \cref{eq:left_jw_def}.

  To do this, consider the ``turn up'' operator which takes a morphism $\underline{n} \to \underline{m}$ to a morphism $\underline{n+1}\to\underline{m-1}$ by ``turning the top strand back''.  Formally, this is the map
\begin{equation}
  u_{m,1} : x \mapsto (\id \otimes x)\circ(\cap\otimes \id_{m-1}).
\end{equation}
  We can iterate this to turn multiple strands:
  set $u_{m,k} = u_{m-k+1,1}\circ u_{m, k-1}$.

  Now, let $J_i = u_i(\JW_{p^{(r)}-1})$ and $J_{-i} = \iota\left(J_i \right)$ for $0\le i \le p^{(r)}-1$.
  Thus $J_i$ is a morphism $\underline{p^{(r)}-1 + i} \to \underline{p^{(r)} -1 - i}$.
  In diagrams,
  \begin{equation}
    J_i =
    \vcenter{\hbox{
      \begin{tikzpicture}[scale=0.5]
  \draw (5,1) rectangle (1,5);
  \node at (3,3) {$\JW_{p^{(r)}-1}$};
  \draw[very thick] (1,1.2) -- (0,1.2);
  \node at (0.5,3.2) {$\vdots$};
  \draw[very thick] (1,1.6) -- (0,1.6);
  \draw[very thick] (1,4.8) -- (0,4.8);
  \draw [decorate,decoration={brace,amplitude=5}] (0,1) -- (0,5);
  \node[rotate=-90] at (-0.9,3) {\footnotesize$p^{(r)}-1$};

  \draw[very thick] (5,4.8) arc (-90:90:0.2);
  \draw[very thick] (5,3.6) arc (-90:90:1.4);
  \draw[very thick] (5,3.2) arc (-90:90:1.8);
  \draw[very thick] (5,5.2) -- (0,5.2);
  \draw[very thick] (5,6.4) -- (0,6.4);
  \draw[very thick] (5,6.8) -- (0,6.8);
  \draw [decorate,decoration={brace,amplitude=5}] (0, 5) -- (0,7);
  \node[rotate=-90] at (-.9, 6.0) {\footnotesize$i$};
  \node at (3,6) {\footnotesize$\vdots$};

  \draw[very thick] (5,1.2) -- (8.5,1.2);
  \draw[very thick] (5,1.6) -- (8.5,1.6);
  \draw[very thick] (5,2.8) -- (8.5,2.8);
  \draw [decorate,decoration={brace,amplitude=5}] (8.5,3) -- (8.5,1);
  \node[rotate=-90] at (9.3,2) {\footnotesize$p^{(r)}-1-i$};
  \node at (6.7,2.4) {\footnotesize$\vdots$};
\end{tikzpicture}
    }}
    \quad\quad\quad 0 \le i \le p^{(r)} -1
  \end{equation}
  and
  \begin{equation}
    J_{-i} =
    \vcenter{\hbox{
      \begin{tikzpicture}[scale=0.5]
  \draw (-5,1) rectangle (-1,5);
  \node at (-3,3) {$\JW_{p^{(r)}-1}$};
  \draw[very thick] (-1,1.2) -- (-0,1.2);
  \node at (-0.5,3.2) {$\vdots$};
  \draw[very thick] (-1,1.6) -- (-0,1.6);
  \draw[very thick] (-1,4.8) -- (-0,4.8);
  \draw [decorate,decoration={brace,amplitude=5}] (-0, 5) -- (-0,1);
  \node[rotate=-90] at (--0.9,3) {\footnotesize$p^{(r)}-1$};

  \draw[very thick] (-5,4.8) arc (270:90:0.2);
  \draw[very thick] (-5,3.6) arc (270:90:1.4);
  \draw[very thick] (-5,3.2) arc (270:90:1.8);
  \draw[very thick] (-5,5.2) -- (-0,5.2);
  \draw[very thick] (-5,6.4) -- (-0,6.4);
  \draw[very thick] (-5,6.8) -- (-0,6.8);
  \draw [decorate,decoration={brace,amplitude=5}] (-0, 7) -- (-0, 5);
  \node[rotate=-90] at (--.9, 6.0) {\footnotesize$i$};
  \node at (-3,6) {\footnotesize$\vdots$};

  \draw[very thick] (-5,1.2) -- (-8.5,1.2);
  \draw[very thick] (-5,1.6) -- (-8.5,1.6);
  \draw[very thick] (-5,2.8) -- (-8.5,2.8);
  \draw [decorate,decoration={brace,amplitude=5}] (-8.5, 1) -- (-8.5,3);
  \node[rotate=-90] at (-9.3,2) {\footnotesize$p^{(r)}-1-i$};
  \node at (-6.7,2.4) {\footnotesize$\vdots$};
\end{tikzpicture}
    }}
    \quad\quad\quad 0 \le i \le p^{(r)} -1
  \end{equation}
  Recall that $\TLcat$ is equipped with an automorphism which flips all diagrams vertically.
  Let $K_i$ be the image of $J_i$ under this morphism.

  The elements $J_i$ and $K_i$ have been chosen such that composition with any cup or cap results in the zero morphism.
  Key to this observation is that the idempotent $\JW_{p^{(r)}-1}$ vanishes under the trace defined in \cref{sec:traces}.

  Now we may define
  \begin{equation}\label{eq:jw2lpk1def}
    e = \sum_{i = -{p^{(r)}-1}}^{p^{(r)}-1} {(-1)}^i \iota (K_i) \otimes \id_1 \otimes J_i.
  \end{equation}
  Diagrammatically, a typical term in the sum (where $i \ge 0$) looks like
  \begin{equation}
    (-1)^{i}
    \vcenter{\hbox{
      \begin{tikzpicture}[scale=0.5]
  \draw (5,1) rectangle (1,5);
  \node at (3,3) {$\JW_{p^{(r)}-1}$};
  \draw[very thick] (1,1.2) -- (0,1.2);
  \node at (0.5,3.2) {$\vdots$};
  \draw[very thick] (1,1.6) -- (0,1.6);
  \draw[very thick] (1,4.8) -- (0,4.8);
  \draw [decorate,decoration={brace,amplitude=5}] (0,1) -- (0,5);
  \node[rotate=-90] at (-0.9,3) {\footnotesize$p^{(r)}-1$};

  \draw[very thick] (5,4.8) arc (-90:90:0.2);
  \draw[very thick] (5,3.6) arc (-90:90:1.4);
  \draw[very thick] (5,3.2) arc (-90:90:1.8);
  \draw[very thick] (5,5.2) -- (0,5.2);
  \draw[very thick] (5,6.4) -- (0,6.4);
  \draw[very thick] (5,6.8) -- (0,6.8);
  \draw [decorate,decoration={brace,amplitude=5}] (0, 5) -- (0,7);
  \node[rotate=-90] at (-.9, 6.0) {\footnotesize$i$};
  \node at (3,6) {\footnotesize$\vdots$};

  \draw[very thick] (5,1.2) -- (8.5,1.2);
  \draw[very thick] (5,1.6) -- (8.5,1.6);
  \draw[very thick] (5,2.8) -- (8.5,2.8);
  \draw [decorate,decoration={brace,amplitude=5}] (8.5,3) -- (8.5,1);
  \node[rotate=-90] at (9.3,2) {\footnotesize$p^{(r)} -i-1$};
  \node at (6.7,2.4) {\footnotesize$\vdots$};

  \draw[very thick] (3,7.2) -- (5.5,7.2);
  \draw[very thick] (5.5, 7.2) to[out=0, in=180] (8.5, 5.2);
  \draw[very thick] (0,9.2) to[out=0, in=180] (3,7.2);

  \draw (3.5, 13.4) rectangle (7.5,9.4);
  \node at (5.5,11.4) {$\JW_{p^{(r)}-1}$};
  \draw[very thick] (7.5,13.2) -- (8.5,13.2);
  \draw[very thick] (7.5,12.8) -- (8.5,12.8);
  \node at (8, 11.4) {$\vdots$};
  \draw[very thick] (7.5,9.6) -- (8.5,9.6);
  \draw [decorate,decoration={brace,amplitude=5}] (8.5,13.4) -- (8.5,9.4);
  \node[rotate=-90] at (9.3,11.4) {\footnotesize$p^{(r)}-1$};

  \draw[very thick] (3.5,9.6) arc (90:270:0.2);
  \draw[very thick] (3.5,10.0) arc (90:270:0.6);
  \draw[very thick] (3.5,11.2) arc (90:270:1.8);
  \draw[very thick] (3.5,9.2) -- (8.5,9.2);
  \draw[very thick] (3.5,8.8) -- (8.5,8.8);
  \draw[very thick] (3.5,7.6) -- (8.5,7.6);
  \draw [decorate,decoration={brace,amplitude=5}] (8.5,9.4) -- (8.5,7.4);
  \node[rotate=-90] at (9.3,8.4) {\footnotesize$i$};
  \node at (5.5,8.4) {\footnotesize$\vdots$};

  \draw[very thick] (3.5,13.2) -- (0,13.2);
  \draw[very thick] (3.5,12.8) -- (0,12.8);
  \draw[very thick] (3.5,11.6) -- (0,11.6);
  \draw [decorate,decoration={brace,amplitude=5}] (0,11.4) -- (0,13.4);
  \node[rotate=-90] at (-.9,12.4) {\footnotesize$p^{(r)} -i-1$};
  \node at (1.8,12.4) {\footnotesize$\vdots$};
\end{tikzpicture}
    }}
  \end{equation}
  The sign coefficient of the summand has been chosen such that the term for which $i = 0$ (i.e. the unique summand containing the identity diagram) has coefficient 1.
  It will thus suffice to show that $e$ is killed by the (left) action of $u_j$ for each $1 \le j < p^{(r)}-1$.

  We know that $\iota(K_i)$ and $J_i$ are killed by all cups on the left and so the only terms in \cref{eq:jw2lpk1def} that do not vanish are those for which $p^{(r)}-i=j$ or $p^{(r)}-i = j+1$.

  These terms are identical up to sign (in which they differ) and so cancel.  They are both given (up to sign) by the diagram in \cref{eq:jw2lpk1kill} and can be written as ${(-1)}^i \iota(K_j) \otimes \cap \otimes J_j$.
  \begin{equation}\label{eq:jw2lpk1kill}
    \vcenter{\hbox{
      \begin{tikzpicture}[scale=0.5]
  \draw (5,1) rectangle (1,5);
  \node at (3,3) {$\JW_{p^{(r)}-1}$};
  \draw[very thick] (1,1.2) -- (0,1.2);
  \draw[very thick] (1,1.6) -- (0,1.6);
  \node at (0.5,3) {$\vdots$};
  \draw[very thick] (1,4.8) -- (0,4.8);
  \draw [decorate,decoration={brace,amplitude=5}] (0,1) -- (0,5);
  \node[rotate=-90] at (-0.9,3) {\footnotesize$p^{(r)}-1$};

  \draw[very thick] (5,4.8) arc (-90:90:0.2);
  \draw[very thick] (5,3.6) arc (-90:90:1.4);
  \draw[very thick] (5,5.2) -- (0,5.2);
  \draw[very thick] (5,6.4) -- (0,6.4);
  \draw [decorate,decoration={brace,amplitude=5}] (0,5) -- (0,6.6);
  \node[rotate=-90] at (-.9,5.8) {\footnotesize$p^{(r)}-1-j$};

  \draw[very thick] (5,1.2) -- (8.5,1.2);
  \draw[very thick] (5,1.6) -- (8.5,1.6);
  \draw[very thick] (5,2.8) -- (8.5,2.8);
  \draw[very thick] (5,3.2) -- (8.5,3.2);
  \draw [decorate,decoration={brace,amplitude=5}] (8.5,3.4) -- (8.5,1);
  \node[rotate=-90] at (9.4,2) {\footnotesize$j$};

  \draw (3.5,13.4) rectangle (7.5,9.4);
  \node at (5.5,11.4) {$\JW_{p^{(r)}-1}$};
  \draw[very thick] (7.5,13.2) -- (8.5,13.2);
  \draw[very thick] (7.5,12.8) -- (8.5,12.8);
  \node at (9,11.4) {$\vdots$};
  \draw[very thick] (7.5,9.6) -- (8.5,9.6);
  \draw [decorate,decoration={brace,amplitude=5}] (8.5,13.4) -- (8.5,9.4);
  \node[rotate=-90] at (9.4,11.4) {\footnotesize$p^{(r)}-1$};

  \draw[very thick] (3.5,9.6 ) arc (90:270:0.2);
  \draw[very thick] (3.5,11.2) arc (90:270:1.8);
  \draw[very thick] (3.5,11.6) arc (90:270:2.2);
  \draw[very thick] (3.5,9.2) -- (8.5,9.2);
  \draw[very thick] (3.5,7.6) -- (8.5,7.6);
  \draw[very thick] (3.5,7.2) -- (8.5,7.2);
  \draw [decorate,decoration={brace,amplitude=5}] (8.5,9.4) -- (8.5,7.0);
  \node[rotate=-90] at (9.4,8.0) {\footnotesize$p^{(r)}-j$};

  \draw[very thick] (3.5,13.2) -- (0,13.2);
  \draw[very thick] (3.5,12.8) -- (0,12.8);
  \draw[very thick] (3.5,12.0) -- (0,12.0);
  \draw[decorate,decoration={brace,amplitude=5}] (0,11.8) -- (0,13.4);
  \node[rotate=-90] at (-.9,12.4) {\footnotesize$j-1$};

  \draw[very thick] (0,9.2) arc (-90:90:0.4);
\end{tikzpicture}
    }}
  \end{equation}
  Since all the terms in the summand are sent to zero by all $u_j$, and the coefficient of the identity diagram is an idempotent, we can invoke \cref{lem:defining_jw} to show that this is indeed $\JW_{2p^{(r)}-1}$.
\end{proof}

Readers familiar with the Dihedral Cathedral~\cite{elias_2016} may be familiar with the ``circular'' Jones-Wenzl notation.
This stems from the observation that rotation of the Jones-Wenzl element, $\JW_{p^{(r)}-1}$, by a single strand leaves it invariant.
In the general case where $\JW_n$ is defined, this is no longer so.

\section{\texorpdfstring{$p$}{p}-Jones-Wenzl Idempotents}\label{sec:p-jones-wenzl}
We now turn to extending the work of Burrull, Libedinsky and Sentinelli~\cite{burrull_libedinsky_sentinelli_2019} in generalising the definition of the Jones-Wenzl idempotent to a sensible element of $\TL^k_n$ for all $n$.
The element constructed in~\cite{burrull_libedinsky_sentinelli_2019} is in fact the idempotent defining the projective cover of the trivial module, although it is not explicitly stated as such.
However, the construction present only covers the $\delta = 2$ case where the situation is ``over the integers''.
In this paper we present a generalisation to all $\delta$.
Our results will specialise in the case $\delta = 2$, which corresponds to $\ell = p$.

We briefly recount their methodology here for completeness and introduce terminology to indicate dependence on $\delta$.
The definition that follows is largely a rewrite  of section 2.3 in~\cite{burrull_libedinsky_sentinelli_2019} and the reader is encouraged to peruse that paper for further information.

Recall the definition of $\supp(n)$ in~\cite{spencer_2020_modular}.  If $n + 1 = \sum_{i=a}^b n_i p^{(i)}$ is the $(\ell,p)$-expansion of $n+1$,
\begin{equation}
  \supp(n) = I_n = \{n_bp^{(b)} \pm n_{b-1}p^{(b-1)} \pm \cdots  \pm n_a p^{(a)}-1\}
\end{equation}
We use the notation $I_n$ to keep parity with ~\cite{burrull_libedinsky_sentinelli_2019}.

A number $n$ is called $(\ell, p)$-Adam if $I_n = \{n\}$ so $a=b$.  Equivalently, $n < \ell$, or $n < \ell p$ and is congruent to $-1$ modulo $\ell$ or of the form $c p^{(r)} -1$ for $1 \le c < p$ and $r \ge 1$.
That is to say, by \cref{thm:when_jw}, a number $n$ is $(\ell, p)$-Adam iff $\JW_n$ can be lifted to $\TL^k_n$.

If a number is not $(\ell, p)$-Adam, and $a$ is minimal such that $n_a \neq 0$, define $\f{n} = \sum_{i = a+1}^b n_i p^{(i)} - 1$.
This is known as the $(\ell, p)$-``father'' of $n$.
It is then the case that
\begin{equation}\label{eq:I_n_split}
  I_n = \{i + n_a p^{(a)} \;:\; i \in I_{\f{n}}\} \sqcup \{i - n_a p^{(a)} \;:\; i \in I_{\f{n}}\}.
\end{equation}

We inductively define elements $\pljwnQ \in \TL_n^{\Q(\delta)}$ as
\begin{equation}
  \pljwnQ = \sum_{j \in I_n} \lambda_n^j\; p_n^j \cdot \JW_j \cdot \iota p_n^j
\end{equation}
where $p_n^j$ are elements of $\Hom_{\TLcat}(\underline{n}, \underline{j})$ and $\lambda_n^j \in \Q(\delta)$.
In diagrams,
\begin{equation}
  \vcenter{\hbox{
      \begin{tikzpicture}[scale=1, transform shape]
  \draw (0,0) rectangle (1.4,1.4);
  \node at (.7, .7) {$\pljwnQ$};
  \draw[very thick] (-0.3, 1.2) -- (0, 1.2);
  \node at (-0.15, .8) {$\vdots$};
  \node at (1.55, .8) {$\vdots$};
  \draw[very thick] (-0.3, 0.2) -- (0, 0.2);
  \draw[very thick] (1.7, 1.2) -- (1.4, 1.2);
  \draw[very thick] (1.7, 0.2) -- (1.4, 0.2);
\end{tikzpicture}

}}
=
\mathlarger{\mathlarger{\sum}}_{j \in I_n}
\lambda_n^j\;
  \vcenter{\hbox{
      \begin{tikzpicture}[scale=1, transform shape]
  \draw (-.4,-.5) rectangle (.4,.5);
  \node at (0, -.05) {$\JW_j$};
  \draw (-.4,-.5) -- (-.9,-.8) -- (-.9, .8) -- (-.4,.5);
  \node at (-.65, 0.02) {$p^j_n$};
  \node at (.65, 0.02) {$p^j_n$};
  \draw (.4,-.5) -- (.9,-.8) -- (.9, .8) -- (.4,.5);
  \draw[very thick] (-1.2, 0.6) -- (-.9, 0.6);
  \node at (-1.05, 0.1) {$\vdots$};
  \draw[very thick] (1.2, 0.6) -- (.9, 0.6);
  \draw[very thick] (-1.2, -0.6) -- (-.9, -0.6);
  \node at (1.05, 0.1) {$\vdots$};
  \draw[very thick] (1.2, -0.6) -- (.9, -0.6);
\end{tikzpicture}
}}
\end{equation}

These elements will, in fact, be defined over $\Z[\delta]_{\mathfrak{m}}$ which is to say that neither $m(\delta)$ nor $p$ will divide any denominators in the coefficients of the diagrams.
As such they will descend to $\TL_n^k$ and will be the idempotents of the projective cover of the trivial module.

To define $\pljwnQ$, we induct on the cardinality of $I_n$.
When $I_n = \{n\}$ we set $\pljwnQ = \JW_n$ so that $\lambda^n_n = 1$ and $p^n_n = \id_n$.

Now suppose that $n$ is not $(\ell, p)$-Adam, but all $\lambda_{n'}^j$ and $p_{n'}^j$ are known for $n'$ with smaller cardinality $I_{n'}$.
In particular, $\lambda_{\f{n}}^j$ and $p_{\f{n}}^j$ are all known.
Let $m = n - \f{n} = n_a p^{(a)}$.
Then for each $i \in I_{\f{n}}$, set
\begin{equation}\label{eq:lambda-rec}
  \lambda_n^{i-m} = \frac{[i+1-m]}{[i+1]} \lambda^i_\f{n}\quad,\quad\quad\quad \lambda_n^{i+m} = \lambda_{\f{n}}^i
\end{equation}
and
\begin{equation}\label{eq:recurse_pni}
  \vcenter{\hbox{
    \begin{tikzpicture}[scale=1, transform shape]
      \draw (-.05,-.5) -- (-.9,-.8) -- (-.9, .8) -- (-.05,.5) -- (-.05,-.5);
      \node at (-.45, 0.02) {$p^{i-m}_n$};
      \draw[very thick] (-1.2, 0.6) -- (-.9, 0.6);
      \draw[very thick] (-1.2, -0.6) -- (-.9, -0.6);
      \node at (-1.05, 0.1) {$\vdots$};
      \draw[very thick] (-.05, 0.4) -- (.2, 0.4);
      \draw[very thick] (-.05, -0.4) -- (.2, -0.4);
      \node at (0.1, 0.1) {$\vdots$};
    \end{tikzpicture}
}}
 =
  \vcenter{\hbox{
    \begin{tikzpicture}[scale=1, transform shape]
      \draw (-.15,-.7) -- (-.9,-1) -- (-.9, 1) -- (-.15,.7) -- (-.15,-.7);
      \node at (-.5, 0.02) {$p^{i}_\f{n}$};
      \draw (-.15,-.7) rectangle (.8,.7);
      \node at (.325, 0) {$\JW_i$};

      \draw[very thick] (-1.2, 0.8) -- (-.9, 0.8);
      \draw[very thick] (-1.2, -0.8) -- (-.9, -0.8);
      \node at (-1.05, 0.1) {$\vdots$};

      \draw[very thick] (.8, 0.6) -- (1.85, 0.6);
      \draw[very thick] (.8, 0.1) -- (1.85, 0.1);
      \draw[very thick] (.8, -0.1) -- (1.05, -0.1);
      \draw[very thick] (.8, -0.6) -- (1.05, -0.6);

      \draw[very thick] (1.05, -0.6) arc (90:-90:0.25);
      \draw[very thick] (1.05, -0.1) arc (90:-90:0.75);

      \draw[very thick] (-1.2, -1.1) -- (1.05, -1.1);
      \draw[very thick] (-1.2, -1.6) -- (1.05, -1.6);
      \node at (-1.05, -1.25) {$\vdots$};
      \node at (1.05, -.25) {$\vdots$};
      \node at (1.05, .45) {$\vdots$};
    \end{tikzpicture}
}}
  \quad,\quad\quad\quad
  \vcenter{\hbox{
\begin{tikzpicture}[scale=1, transform shape]
  \draw (-.05,-.5) -- (-.9,-.8) -- (-.9, .8) -- (-.05,.5) -- (-.05,-.5);
  \node at (-.45, 0.02) {$p^{i+m}_n$};
  \draw[very thick] (-1.2, 0.6) -- (-.9, 0.6);
  \draw[very thick] (-1.2, -0.6) -- (-.9, -0.6);
  \node at (-1.05, 0.1) {$\vdots$};
  \draw[very thick] (-.05, 0.4) -- (.2, 0.4);
  \draw[very thick] (-.05, -0.4) -- (.2, -0.4);
  \node at (0.1, 0.1) {$\vdots$};
\end{tikzpicture}
}}
 =
  \vcenter{\hbox{
\begin{tikzpicture}[scale=1, transform shape]
  \draw (-.15,-.5) -- (-.9,-.8) -- (-.9, .8) -- (-.15,.5) -- (-.15,-.5);
  \node at (-.5, 0.02) {$p^{i}_\f{n}$};
  \draw[very thick] (-1.2, 0.6) -- (-.9, 0.6);
  \draw[very thick] (-1.2, -0.6) -- (-.9, -0.6);
  \node at (-1.05, 0.1) {$\vdots$};
  \draw[very thick] (-.15, 0.4) -- (.1, 0.4);
  \draw[very thick] (-.15, -0.4) -- (.1, -0.4);
  \node at (-0, 0.1) {$\vdots$};
  \draw[very thick] (-1.2, -0.9) -- (.1, -0.9);
  \node at (-.5, -1.1) {$\vdots$};
  \draw[very thick] (-1.2, -1.5) -- (.1, -1.5);
\end{tikzpicture}
}}
\end{equation}
This concludes the definition of $\pljwnQ$.

\vspace{1em}
It is clear that in the case $\delta = 2$, which implies $\ell = p$, this coincides\footnote{Readers concerned with the lack of sign in \cref{eq:lambda-rec} should recall that our convention is that loops resolve to $\delta$, instead of $-\delta$.} with the definition given by Burrull, Libedinsky and Sentinelli.
The only changes are to introduce quantum numbers in the fractions in \cref{eq:lambda-rec} and to use our generalized sets $I_n$.

Letting $U_n^i = p_n^i \cdot \JW_i \cdot \iota p_n^j$ so that $\pljwnQ = \sum_{i \in I_n} \lambda^i_n U_n^i$, we have the following proposition.
The proof carries over exactly from~\cite{burrull_libedinsky_sentinelli_2019}, but with the use of quantum numbers in all the fractions.

\begin{proposition}\cite[3.2]{burrull_libedinsky_sentinelli_2019}\label{prop:pljwnQ-idempotent}
  The element $\pljwnQ \in \TL^{\Q(\delta)}_n$ is an idempotent.  Moreover, $\{\lambda^i_n U_n^i\}_{i+1 \in I_n}$ is a set of mutually orthogonal idempotents.
\end{proposition}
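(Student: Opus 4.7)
The approach is induction on the cardinality $|I_n|$. The base case is when $n$ is $(\ell,p)$-Adam, so $I_n = \{n\}$, $\pljwnQ = \JW_n$ with $\lambda_n^n = 1$ and $p_n^n = \id_n$, and the claim reduces to the idempotency of $\JW_n$ over $\Q(\delta)$. For the inductive step, assume both halves of the proposition hold for $\pljwfnQ$, so that $\{\lambda_\f{n}^i U_\f{n}^i\}_{i \in I_\f{n}}$ is a family of mutually orthogonal idempotents. I would verify separately that (a) each $\lambda_n^j U_n^j$ with $j \in I_n$ is an idempotent, and (b) distinct such elements annihilate each other; these two together give idempotency of the sum $\pljwnQ$ immediately.

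For (a), the two halves of the decomposition \cref{eq:I_n_split} behave differently. If $j = i + m$ (with $m = n - \f{n}$), the diagram in \cref{eq:recurse_pni} shows that $p_n^{i+m} = p_\f{n}^i \otimes \id_m$, hence $U_n^{i+m} = U_\f{n}^i \otimes \id_m$; squaring and applying the inductive identity $(U_\f{n}^i)^2 = (\lambda_\f{n}^i)^{-1} U_\f{n}^i$ together with $\lambda_n^{i+m} = \lambda_\f{n}^i$ yields the claim. If instead $j = i - m$, then in $(U_n^{i-m})^2$ the central factor $\iota p_n^{i-m} \cdot p_n^{i-m}$ produces, after the two adjacent $\JW_i$'s merge via $\JW_i \cdot \JW_i = \JW_i$, two copies of $\JW_i$ with their last $m$ strands joined pairwise by $m$ nested cups. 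The crucial ingredient here is the iterated partial-trace identity for Jones-Wenzl idempotents: closing off the last $m$ strands of $\JW_i$ on both sides collapses the result to $([i+1]/[i-m+1])\,\JW_{i-m}$. Substituting this and using the inductive idempotency on the $p_\f{n}^i$ factors yields $(U_n^{i-m})^2 = ([i+1]/[i-m+1])(\lambda_\f{n}^i)^{-1} U_n^{i-m}$, which is precisely $(\lambda_n^{i-m})^{-1} U_n^{i-m}$ by the recursion \cref{eq:lambda-rec}.

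For (b), consider distinct $j, k \in I_n$. If they descend from distinct parents $i \neq i' \in I_\f{n}$, the product $U_n^j U_n^k$ contains $U_\f{n}^i \cdot U_\f{n}^{i'}$ as a subdiagram on the $\f{n}$ inner strands, and this vanishes by the inductive orthogonality. If instead $j = i + m$ and $k = i - m$ for the same $i$, the product exhibits a cup or cap on the $m$ outer strands attached directly to one of the $\JW_i$ factors, which annihilates it by \cref{eq:left_jw_def}. In either case the product is zero.

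The main obstacle in this strategy is the iterated partial-trace identity for $\JW_i$. For $m = 1$ it is the single-clasp calculation underpinning the recursion \cref{eq:jw_recurse}, giving factor $[i+1]/[i]$, and the general case follows by induction on $m$, with the factors telescoping to $[i+1]/[i-m+1]$. Everything else is diagrammatic bookkeeping which, up to the systematic replacement of plain integer ratios by quantum-number ratios, follows~\cite{burrull_libedinsky_sentinelli_2019} verbatim; the same recursion \cref{eq:lambda-rec} for $\lambda_n^j$ is designed precisely so that these scalar factors cancel.
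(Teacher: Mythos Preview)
Your approach is correct and matches the paper's, which defers to~\cite{burrull_libedinsky_sentinelli_2019} and isolates the single key identity
\[
  \JW_j \cdot \iota p_n^j \cdot p_n^k \cdot \JW_k =
  \begin{cases}
    (\lambda_n^j)^{-1}\,\JW_j & j = k,\\
    0 & j \neq k,
  \end{cases}
\]
established by induction on $|I_n|$; your cases (a) and (b) are precisely the diagonal and off-diagonal parts of this. One small correction to your exposition: in the computation of $(U_n^{i-m})^2$, the two $\JW_i$'s inside the central factor $\iota p_n^{i-m}\cdot p_n^{i-m}$ are \emph{not} adjacent---they are separated by $\iota p_\f{n}^i \cdot p_\f{n}^i$---so they do not merge via $\JW_i\cdot\JW_i = \JW_i$. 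The actual collapse is the inductive key identity for $\f{n}$, which replaces $\JW_i\cdot \iota p_\f{n}^i \cdot p_\f{n}^i \cdot \JW_i$ by $(\lambda_\f{n}^i)^{-1}\JW_i$; only \emph{then} does one have a single $\JW_i$ with its last $m$ inputs and outputs joined by nested arcs, to which $\tau^m$ applies. You invoke this inductive step in your very next sentence, so the argument is sound; only the intermediate description needs tidying.
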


The result hinges on the equation (for each $i, j \in I_n$)
\begin{equation}
  \JW_i \cdot \iota p_{n}^i \cdot p_{n}^j \cdot \JW_j =
  \begin{cases}
    \frac{1}{\lambda^i_{n}} \JW_i & i = j\\
    0 & i \neq j
  \end{cases}
\end{equation}
which we may read as $\langle p_n^i,p_n^i \rangle = \frac{1}{\lambda^i_n}$ in $S(n,i)$.

\begin{lemma}
  Each $\TL^{\Q(\delta)}_n \cdot \lambda_n^i U_n^i$ is isomorphic to $S(n,i)$.
\end{lemma}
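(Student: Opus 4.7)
The plan is to leverage the split semisimplicity of $\TL^{\Q(\delta)}_n$ and identify $\TL^{\Q(\delta)}_n \cdot \lambda^i_n U^i_n$ via Wedderburn multiplicities. For any idempotent $e$ we have $\TL^{\Q(\delta)}_n \cdot e \cong \bigoplus_j S(n,j)^{\oplus m_j}$ with $m_j = \dim(e \cdot S(n,j))$, so the lemma reduces to showing $m_j$ equals $1$ when $j = i$ and $0$ otherwise.

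Unpacking the left action gives $U^i_n \cdot v = v \circ \iota p^i_n \circ \JW_i \circ p^i_n$ for $v \in S(n,j)$. When $j > i$ this composite factors through $\underline{i}$ and therefore has at most $i < j$ through-strands, so it represents zero in $S(n,j)$. For $j < i$ I would regroup as $((v \circ \iota p^i_n) \circ \JW_i) \circ p^i_n$ and argue that right-postcomposition with $\JW_i$ annihilates the inner factor. The Hom-space $\Hom_{\TLcat}(\underline{i}, \underline{j})$ carries a through-strand cellular filtration whose subquotients involve only simple right $\TL_i$-modules labelled by $k \le j < i$; in the semisimple setting this filtration splits, and since $\JW_i$ is the primitive idempotent carving out the label $k = i$ (the trivial $\TL_i$-module), it annihilates every summand present. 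Thus the middle factor vanishes and $U^i_n \cdot v = 0$.

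The case $j = i$ supplies the rank-one piece. Here $v \circ \iota p^i_n$ lies in $\End(\underline{i}) = \TL_i$, and the Jones-Wenzl relations give $x \circ \JW_i = \tau(x) \JW_i$ for every $x \in \TL_i$, where $\tau$ extracts the coefficient of $\id_i$. Consequently $U^i_n$ acts on $S(n,i)$ as the rank-at-most-one operator $v \mapsto \tau(v \circ \iota p^i_n) \cdot (\JW_i \circ p^i_n)$. Evaluating at $v = p^i_n$ and applying the identity $\JW_i \cdot \iota p^i_n \cdot p^i_n \cdot \JW_i = \JW_i / \lambda^i_n$ recorded just before the lemma yields $\tau(p^i_n \circ \iota p^i_n) = 1/\lambda^i_n \neq 0$, so the operator has rank exactly one. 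Combining the three cases produces $\TL^{\Q(\delta)}_n \cdot \lambda^i_n U^i_n \cong S(n,i)$.

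The principal obstacle is the $j < i$ case, which rests on the semisimple decomposition of the Hom-space as a right $\TL_i$-module; this is precisely the step that fails outside of characteristic zero and explains why the lemma sits over $\Q(\delta)$ rather than directly over the modular field $k$.
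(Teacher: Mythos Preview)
Your proof is correct but takes a genuinely different route from the paper's. The paper constructs an explicit $\TL_n$-module map $\phi : \TL_n \cdot \lambda^i_n U^i_n \to S(n,i)$ by $a \cdot p^i_n \JW_i\, \iota p^i_n \mapsto a \cdot p^i_n$, then checks surjectivity (any nonzero element generates the simple module $S(n,i)$) and injectivity (if $a \cdot p^i_n$ vanishes in $S(n,i)$ then as a morphism it factors through some $\underline{j}$ with $j<i$, so $a \cdot p^i_n \cdot \JW_i = 0$). You instead compute the Wedderburn multiplicities $m_j = \dim\big(\lambda^i_n U^i_n \cdot S(n,j)\big)$, handling the three ranges of $j$ separately. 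Both arguments rely on semisimplicity and on $\JW_i$ killing morphisms of sub-maximal through-degree, but package these facts differently: the paper's version is shorter and yields an explicit isomorphism, while yours makes transparent how $U^i_n$ acts on each simple and pinpoints that only the $j<i$ case genuinely needs the semisimple decomposition. One small simplification is available in that case: every diagram $\underline{i}\to\underline{j}$ with $j<i$ carries a simple cap on the $i$-side, and such a cap kills $\JW_i$ directly (from $u_k\,\JW_i=0$ and invertibility of $\delta$), so you do not need to invoke the full cellular filtration of $\Hom(\underline{i},\underline{j})$.
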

\begin{proof}
  Recall that we are in the semi-simple case and omit the superscripts.
  Consider the $\TL_n$-morphism $\phi : \TL_n \cdot \lambda_n^i U_n^i \to S(n,i)$ defined by
  \begin{equation}
    a \cdot p^i_n \JW_i \;\iota p^i_n \mapsto a \cdot p^i_n
  \end{equation}
  This is surjective since any nonzero element of $S(n,i)$ generates the entire module and is injective as if $a \cdot p^i_n= 0 \in S(n,i)$, then the morphism $a \cdot p^i_n \in \Hom_\TLcat(\underline{n}, \underline{i})$ factors through some $\underline{j}$ for $j < i$ and hence $a\cdot p^i_n \JW_i = 0$.
\end{proof}

\begin{corollary}\label{cor:char_zero_factors}
  As $\TL^{\Q(\delta)}_n$-modules, $\TL^{\Q(\delta)}_n\cdot \pljwnQ\simeq \bigoplus_{i+1 \in I_n}{S(n, i)}$.
\end{corollary}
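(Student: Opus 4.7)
The plan is to string together the two results immediately preceding the corollary. By \Cref{prop:pljwnQ-idempotent}, the family $\{\lambda^i_n U_n^i\}_{i \in I_n}$ consists of mutually orthogonal idempotents whose sum (by the very definition of $\pljwnQ$) is $\pljwnQ$. A completely formal argument in any ring then gives a direct sum decomposition of the left ideal:
\begin{equation*}
  \TL^{\Q(\delta)}_n\cdot \pljwnQ \;=\; \bigoplus_{i\in I_n} \TL^{\Q(\delta)}_n\cdot \lambda^i_n U_n^i,
\end{equation*}
since any $x\pljwnQ = \sum_i x \lambda^i_n U^i_n$ with uniqueness of the summands coming from right-multiplying by each $\lambda^j_n U^j_n$ and invoking orthogonality together with the idempotency of each $\lambda^i_n U_n^i$.

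Having split $\TL_n\cdot \pljwnQ$ as a direct sum of the left ideals generated by the individual orthogonal idempotents, the preceding lemma identifies each summand $\TL^{\Q(\delta)}_n\cdot \lambda^i_n U^i_n$ with the cell module $S(n,i)$. Assembling these isomorphisms across $i\in I_n$ gives exactly
\begin{equation*}
  \TL^{\Q(\delta)}_n\cdot \pljwnQ \;\simeq\; \bigoplus_{i \in I_n} S(n,i),
\end{equation*}
which is the stated decomposition.

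There is essentially no obstacle: the proof is a one-line bookkeeping exercise combining \Cref{prop:pljwnQ-idempotent} with the preceding lemma. The only subtle point worth flagging is that the direct-sum splitting of a left ideal generated by a sum of pairwise orthogonal idempotents is not entirely trivial, but it is a standard fact (and uses only orthogonality together with $(\lambda^i_n U_n^i)^2 = \lambda^i_n U_n^i$, both of which are already recorded in \Cref{prop:pljwnQ-idempotent}). The semi-simplicity of $\TL^{\Q(\delta)}_n$ is not needed for the decomposition itself — only for the identification of each summand with $S(n,i)$ in the cited lemma.
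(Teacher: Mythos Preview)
Your argument is correct and is exactly the intended one: the paper states this as an immediate corollary with no proof, and the only sensible reading is precisely the combination of \Cref{prop:pljwnQ-idempotent} (orthogonal idempotent decomposition) with the preceding lemma (identification of each summand with $S(n,i)$) that you spell out. The only cosmetic point is that the statement indexes over $i+1 \in I_n$ while you write $i \in I_n$; the paper itself is inconsistent on this indexing, so this is not a mathematical issue.
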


Similarly to \cref{prop:pljwnQ-idempotent}, the proof of the following runs identically to that in~\cite{burrull_libedinsky_sentinelli_2019}.
\begin{proposition}\label{prop:absorption}
  The idempotents $\pljwnQ$ satisfy the ``absorption property'',
  \begin{equation}
    \vcenter{\hbox{
      \begin{tikzpicture}[scale=1, transform shape]
        \draw (0,0) rectangle (1.4,1.4);
        \node at (0.7,0.7) {$\pljwnQ$};

        \draw (-1.4,0.4) rectangle (0,1.4);
        \node at (-0.7,0.9) {$\prescript{p}{\ell}{\JW}^{\Q(\delta)}_{\f{n}}$};

        \draw (-1.4,0) rectangle (0,.4);
        \node at (-0.7,0.2) {$\id_m$};
      \end{tikzpicture}
    }}
     =
    \vcenter{\hbox{
      \begin{tikzpicture}[scale=1, transform shape]
        \draw (0,0) rectangle (1.4,1.4);
        \node at (0.7,0.7) {$\pljwnQ$};

        \draw (1.4,0.4) rectangle (2.8,1.4);
        \node at (2.1,0.9) {$\prescript{p}{\ell}{\JW}^{\Q(\delta)}_{\f{n}}$};

        \draw (1.4,0) rectangle (2.8,.4);
        \node at (2.1,0.2) {$\id_m$};
      \end{tikzpicture}
    }}
    =
    \vcenter{\hbox{
      \begin{tikzpicture}[scale=1, transform shape]
        \draw (0,0) rectangle (1.4,1.4);
        \node at (0.7,0.7) {$\pljwnQ$};
      \end{tikzpicture}
    }}
    .
  \end{equation}
\end{proposition}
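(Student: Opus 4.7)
The plan is to exploit the recursive formula \cref{eq:recurse_pni} together with the orthogonality that underlies \cref{prop:pljwnQ-idempotent}, reducing the absorption property to a single identity at the $\f{n}$-level. Since each summand $p_n^j \JW_j \iota p_n^j$ of $\pljwnQ$ is fixed by $\iota$, so is $\pljwnQ$; and $\pljwfnQ \otimes \id_m$ is also $\iota$-invariant, so applying $\iota$ interchanges the two stated equalities. It therefore suffices to prove $(\pljwfnQ \otimes \id_m) \cdot \pljwnQ = \pljwnQ$, which by linearity reduces to showing $(\pljwfnQ \otimes \id_m) \cdot U_n^j = U_n^j$ for each $j \in I_n$.

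The key tool is the lemma
\begin{equation*}
  \pljwfnQ \cdot p_{\f{n}}^i \cdot \JW_i = p_{\f{n}}^i \cdot \JW_i \qquad (i \in I_{\f{n}}),
\end{equation*}
which follows by expanding $\pljwfnQ = \sum_{i' \in I_{\f{n}}} \lambda_{\f{n}}^{i'} p_{\f{n}}^{i'} \JW_{i'} \iota p_{\f{n}}^{i'}$ and applying the orthogonality $\JW_{i'} \cdot \iota p_{\f{n}}^{i'} \cdot p_{\f{n}}^i \cdot \JW_i = (\delta_{i'i}/\lambda_{\f{n}}^i)\,\JW_{i'}$ that is the engine of the proof of \cref{prop:pljwnQ-idempotent}.

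By \cref{eq:I_n_split}, every $j \in I_n$ equals $i \pm m$ for some $i \in I_{\f{n}}$. For $j = i - m$, the recursive formula \cref{eq:recurse_pni} already places the factor $\JW_i \otimes \id_m$ inside $p_n^{i-m}$ (right after $p_{\f{n}}^i \otimes \id_m$ and before the $m$ nested cups), so the key lemma applied to the top $\f{n}$ strands gives $(\pljwfnQ \otimes \id_m) \cdot p_n^{i-m} = p_n^{i-m}$, and post-composing with $\JW_{i-m} \cdot \iota p_n^{i-m}$ yields $(\pljwfnQ \otimes \id_m) \cdot U_n^{i-m} = U_n^{i-m}$. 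For $j = i + m$, where $p_n^{i+m} = p_{\f{n}}^i \otimes \id_m$ has no $\JW_i$ inside, one first invokes the classical semisimple absorption $(\JW_i \otimes \id_m) \cdot \JW_{i+m} = \JW_{i+m}$ (valid in $\TL_{i+m}^{\Q(\delta)}$) to rewrite $p_n^{i+m} \cdot \JW_{i+m} = (p_{\f{n}}^i \JW_i \otimes \id_m) \cdot \JW_{i+m}$; the key lemma then absorbs $\pljwfnQ \otimes \id_m$ on the left, and post-composition with $\iota p_n^{i+m}$ delivers $(\pljwfnQ \otimes \id_m) \cdot U_n^{i+m} = U_n^{i+m}$.

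The main obstacle is the asymmetry between the two cases: the $-m$ branch inherits the needed $\JW_i$ directly from the construction of $p_n^{i-m}$, whereas the $+m$ branch has to ``insert'' it by exploiting the absorption of $\JW_i \otimes \id_m$ by $\JW_{i+m}$. Once both cases are settled, summing over $j \in I_n$ gives $(\pljwfnQ \otimes \id_m) \cdot \pljwnQ = \pljwnQ$, and the reverse identity follows by applying $\iota$.
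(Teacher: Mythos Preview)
Your proof is correct and follows essentially the approach the paper defers to: the paper does not give its own argument but states that the proof ``runs identically to that in~\cite{burrull_libedinsky_sentinelli_2019}'' and, as with \cref{prop:pljwnQ-idempotent}, hinges on the orthogonality relation $\JW_{i'}\cdot\iota p_{\f{n}}^{i'}\cdot p_{\f{n}}^{i}\cdot\JW_i = \delta_{i'i}\,\lambda_{\f{n}}^{i\,-1}\JW_i$. Your key lemma $\pljwfnQ\cdot p_{\f{n}}^i\cdot\JW_i = p_{\f{n}}^i\cdot\JW_i$ is exactly the consequence of that orthogonality one needs, and the case split $j=i\pm m$ together with the classical absorption $(\JW_i\otimes\id_m)\cdot\JW_{i+m}=\JW_{i+m}$ is the expected route; the $\iota$-symmetry reduction to one side is clean and matches the spirit of \cref{lem:defining_jw}.
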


\Cref{cor:char_zero_factors,prop:absorption}, along with knowledge of the composition factors of the projective cover of the trivial module will give us all the ingredients to show the following:
\begin{proposition}\label{prop:lift}
  The element $\pljwnQ$ can be lifted to $\Z[\delta]_{\mathfrak{m}}$ and therefore when written in the diagram basis, each coefficient can be written as $a / b$ where $b$ does not vanish in $k$.
\end{proposition}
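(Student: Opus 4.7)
The plan is to realise $\pljwnQ$ as the image under base change of an idempotent living in $\TL_n^R$, using the modular system $(F,R,k)$ set up earlier. I will proceed by induction on $|I_n|$. The base case $|I_n|=1$ corresponds exactly to $n$ being $(\ell,p)$-Adam, and there $\pljwnQ=\JW_n$ lifts by \Cref{thm:when_jw}.

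For the inductive step, I first invoke the representation theory assembled in \Cref{sec:induction} (together with results from \cite{spencer_2020_modular}) to identify the projective cover $P_n^k$ of the trivial $\TL_n^k$-module: it carries a cell filtration whose factors are precisely $\{S^k(n,i)\}_{i+1\in I_n}$, each with multiplicity one. This matches the decomposition in \Cref{cor:char_zero_factors}. Choose an idempotent $e_k\in\TL_n^k$ with $\TL_n^k\cdot e_k\simeq P_n^k$. Since $\TL_n^R$ is free of finite rank over the $\mathfrak{m}$-adically complete ring $R$, a Hensel-style idempotent lift produces $e_R\in\TL_n^R$ reducing to $e_k$ modulo $\mathfrak{m}R$. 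Flat base change along $R\hookrightarrow F$ preserves the cell multiplicities, so $\TL_n^F\cdot(e_R\otimes 1)$ and $\TL_n^F\cdot\pljwnQ$ share the same cell composition; semisimplicity of $\TL_n^F$ then furnishes an abstract isomorphism of left $\TL_n^F$-modules.

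The main obstacle is to upgrade this abstract isomorphism to an equality of idempotents $e_R\otimes 1=\pljwnQ$ in $\TL_n^F$, because in general two idempotents generating isomorphic modules merely differ by a conjugation. My plan is to exploit the absorption property \Cref{prop:absorption} together with the inductive lift of $\pljwfnQ$: by induction $\pljwfnQ\in\TL_{\f{n}}^R$, so the corner algebra $C=(\pljwfnQ\otimes\id_m)\TL_n^F(\pljwfnQ\otimes\id_m)$ is defined over $R$, and $\pljwnQ$ sits inside it by absorption. After replacing $e_R$ by a conjugate in $C\cap\TL_n^R$ one may also assume $e_R\otimes 1\in C$. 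Inside $C$ the explicit orthogonal decomposition $\pljwnQ=\sum_{i}\lambda_n^iU_n^i$ from \Cref{prop:pljwnQ-idempotent} exhibits $\pljwnQ$ as a sum of primitive idempotents indexed by $I_n$; since each cell factor appears with multiplicity one, a summand-by-summand matching pins down the conjugator, which can be taken to reduce to the identity modulo $\mathfrak{m}$ by completeness. Adjusting $e_R$ by this conjugator inside $\TL_n^R$ achieves $e_R\otimes 1=\pljwnQ$.

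With the identification secured, the diagram-basis coefficients of $\pljwnQ$ lie in $R\cap\Q(\delta)$. Since $\Z[\delta]_{\mathfrak{m}}$ is a Noetherian local domain with completion $R$, this intersection equals $\Z[\delta]_{\mathfrak{m}}$, so every coefficient has the form $a/b$ with $b\notin\mathfrak{m}$, i.e., $b$ does not vanish in $k$.
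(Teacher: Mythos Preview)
Your strategy is the paper's: induct on $|I_n|$, use absorption to place both $\pljwnQ$ and a lift $\hat e$ of the projective-cover idempotent inside the corner $\hat g\,\TL_n^F\,\hat g$ with $\hat g=\pljwfnQ\otimes\id_m$, and compare them there.

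Where you diverge is the endgame. You invoke conjugators that must be ``pinned down'' and then argued to reduce to the identity modulo $\mathfrak m$; neither step is justified, and the second is precisely the delicate point you are trying to establish. The paper bypasses all of this by using the full strength of \cref{eq:grot_ind}: in $\TL_n^F\cdot\hat g$ each simple $S(n,i)$ with $i\in I_n$ occurs with multiplicity exactly one, \emph{and} the remaining simple constituents have indices in a set $J$ disjoint from $I_n$. Hence there is a \emph{unique} idempotent in $\hat g\,\TL_n^F\,\hat g\cong\End_{\TL_n^F}(\TL_n^F\cdot\hat g)$ whose image has composition factors $\{S(n,i):i\in I_n\}$. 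Both $\hat e$ and $\pljwnQ$ are such idempotents (the latter by \cref{cor:char_zero_factors}), so $\hat e=\pljwnQ$ on the nose---no conjugation enters. Your ``multiplicity one'' remark is gesturing at this, but the disjointness of $J$ from $I_n$ is what actually forces uniqueness rather than mere conjugacy. Relatedly, the step ``after replacing $e_R$ by a conjugate in $C\cap\TL_n^R$'' is vague; the paper simply chooses $e$ over $k$ from the outset as a sub-idempotent of $g$ (legitimate since $P(n,n)$ is a direct summand of $P(\f n,\f n)\ind_{\f n}^n$) and lifts within the corner.
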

The proof is deferred until \cref{sec:results}.
This allows us to define the $(\ell, p)$-Jones-Wenzl projector on $n$ strands.
\begin{definition}
  Let $(R,\delta)$ be a field with $(\ell, p)$ torsion.
  The $(\ell, p)$-Jones-Wenzl idempotent in $\TL_n^k(\delta)$, denoted $\pljwnR$, is that element obtained by replacing each coefficient $a / b$ in $\pljwnQ$ by its image in $k$.
\end{definition}
To prove \cref{prop:lift}, we will need a further claim that will be shown simultaneously.
\begin{theorem}\label{thm:idem_prim_iso}
  As a left $\TL_n^k$-module,  $\TL_n^k \cdot \pljwnR$ is isomorphic to the projective cover of the trivial $\TL_n^k$-module.
\end{theorem}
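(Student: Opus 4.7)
The plan is to use the $(\ell,p)$-modular system $(F,R,k)$ to lift $\pljwnQ$ to an idempotent $\tilde e\in\TL_n^R$, reduce to $\pljwnR\in\TL_n^k$, and then identify $M_k:=\TL_n^k\cdot\pljwnR$ with the projective cover $P(n,n)$ of the trivial module by matching dimensions against a cell filtration of $P(n,n)$.

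Granting \cref{prop:lift}, which is proved simultaneously, $\pljwnQ$ already lies in $\TL_n^R$ as an idempotent $\tilde e$ whose reduction modulo $\mathfrak mR$ is $\pljwnR$. The $R$-module $M_R:=\TL_n^R\cdot\tilde e$ is $R$-free, being a summand of the free module $\TL_n^R$, so
\[
  \dim_k M_k = \operatorname{rank}_R M_R = \dim_F(\TL_n^F\cdot\pljwnQ) = \sum_{i+1\in I_n}\dim_F S(n,i)
\]
by \cref{cor:char_zero_factors}. Moreover, the only summand $U_n^i$ of $\pljwnQ$ that contains the identity diagram is $U_n^n=\JW_n$ (every other $U_n^i$ factors through a smaller object $\underline{i}$), and its scalar $\lambda_n^n$ is $1$ upon iterating \cref{eq:lambda-rec} back to the Adam case. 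So the coefficient of the identity in $\pljwnR$ is $1$; hence $\pljwnR$ is not in the maximal ideal of proper diagrams, and $M_k$ surjects onto the trivial module $L(n,n)$.

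On the other side, the induction-of-projectives machinery of \cref{sec:induction} combined with cellular BGG reciprocity describes $P(n,n)$ as cell-filtered with $(P(n,n):S(n,i))=[S(n,i):L(n,n)]$, and the computation of these composition multiplicities in~\cite{spencer_2020_modular} (the content of the equality $\supp(n)=I_n$) shows this number equals $1$ when $i+1\in I_n$ and $0$ otherwise. Summing dimensions, $\dim_k P(n,n)=\sum_{i+1\in I_n}\dim_k S(n,i)=\dim_k M_k$, using $R$-freeness of each cell module. Since $M_k$ is projective and admits a surjection onto $L(n,n)$, the projective cover $P(n,n)$ splits off $M_k$ as a direct summand, and the equality of dimensions forces $M_k\simeq P(n,n)$.

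The main obstacle is the simultaneous claim \cref{prop:lift}, that no $\mathfrak m$-poles appear in the diagram-basis coefficients of $\pljwnQ$. The natural approach is a simultaneous induction on $|I_n|$: the inductive identification $\TL_n^k\cdot\pljwfnR\simeq P(\f n,\f n)$, combined with the absorption property \cref{prop:absorption}, controls the new denominators introduced by \cref{eq:lambda-rec,eq:recurse_pni}, because any uncancelled pole would contradict the existence of an idempotent in $\TL_n^R$ reducing to a primitive idempotent at the $P(n,n)$ summand of $\TL_n^k$. Once integrality is secured, the three preceding steps immediately yield the theorem.
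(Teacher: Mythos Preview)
Your deduction of the theorem \emph{from} \cref{prop:lift} is correct and is a genuinely different route from the paper's.  You count dimensions: the $R$-free lift forces $\dim_k(\TL_n^k\cdot\pljwnR)=\sum_{i}\dim S(n,i)$ over $I_n$ by \cref{cor:char_zero_factors}, this matches $\dim_k P(n,n)$ by the cell-filtration description of $P(n,n)$, and projectivity together with the surjection onto the trivial module forces $M_k\simeq P(n,n)$.  The paper does not argue this way at all; it instead shows directly that $\pljwnQ$ \emph{equals} the lift $\hat e$ of a primitive idempotent for $P(n,n)$, by a uniqueness argument inside the corner algebra $\hat g\,\TL_n^F\,\hat g$ where $\hat g=\pljwfnQ\otimes\id$.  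Your route is arguably cleaner once \cref{prop:lift} is in hand.

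The gap is in your last paragraph, which is meant to supply \cref{prop:lift} and without which nothing is proved.  You write that ``any uncancelled pole would contradict the existence of an idempotent in $\TL_n^R$ reducing to a primitive idempotent at the $P(n,n)$ summand''.  This is a non sequitur: such an idempotent $\hat e\in\TL_n^R$ certainly exists by idempotent lifting, independently of whether $\pljwnQ$ has poles, and its mere existence says nothing about the coefficients of the \emph{a priori} different element $\pljwnQ$.  What is required---and what you have not done---is to prove $\pljwnQ=\hat e$.  The paper obtains this from two ingredients you mention but do not combine: (i) absorption places $\pljwnQ$ in $\hat g\,\TL_n^F\,\hat g$, and $\hat e$ can be lifted inside the same corner; (ii) crucially, \cref{eq:grot_ind} shows that in the semisimple decomposition of $\TL_n^F\cdot\hat g$ the cell support of $e[P(n,n)]$ is \emph{disjoint} from that of the complementary summand, so there is a \emph{unique} idempotent in $\hat g\,\TL_n^F\,\hat g$ whose image has exactly the cell factors $\{S(n,i):i\in I_n\}$.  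Both $\hat e$ and, by \cref{cor:char_zero_factors}, $\pljwnQ$ qualify, whence $\pljwnQ=\hat e\in\TL_n^R$.  Until you supply this uniqueness step (or some substitute for it), the inductive step for \cref{prop:lift} has no content and the theorem is not established.
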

As such, we obtain the corollary that the idempotent $\pljwnR$ is primitive.

\section{Induction and Projective Covers of the Trivial Module}\label{sec:induction}
\subsection{Induction}
In this section, $R$ and $\delta$ will be arbitrary and therefore omitted from the notation.

Recall that $\TL_{n-1} \hookrightarrow \TL_n$ naturally by the addition of a ``through string'' at the lowest sites.
In this way, we may induce $\TL_{n-1}$-modules
\begin{equation}
  M\ind = \TL_n \otimes_{\TL_{n-1}} M
\end{equation}
and restrict $\TL_n$-modules to $\TL_{n-1}$-modules, which we denote $M\restr$.

The proof of the following proposition follows exactly as in the characteristic zero case.
\begin{proposition}\cite[6.3]{ridout_saint_aubin_2014}
  If $\delta \neq 0$ or $(n,m) \neq (2,0)$,
  \begin{equation}
    S(n-1,m)\ind \;\cong\; S(n+1, m)\restr
  \end{equation}
  as $\TL_n$-modules.
\end{proposition}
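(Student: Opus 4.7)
My approach is to identify both sides with $\TL_n$-modules having explicit half-diagram bases and then write down a natural $\TL_n$-equivariant map between them. Given an $(n-1, m)$-half diagram $v$ spanning a basis element of $S(n-1, m)$, let $\tilde v$ denote the $(n+1, m)$-half diagram obtained from $v$ by appending two additional bottom sites (labelled $n$ and $n+1$) joined by a cup. Define
\[
\phi : S(n-1, m)\ind \to S(n+1, m)\restr, \qquad \phi(d \otimes v) = d \cdot \tilde v,
\]
where $d \in \TL_n$ is viewed as an element of $\TL_{n+1}$ via the standard inclusion (adjoining a through-strand at the rightmost position), and $d \cdot \tilde v$ denotes the usual $\TL_{n+1}$-action on $S(n+1, m)$.

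I would verify three things in sequence. First, that $\phi$ descends to the balanced tensor product: for $a \in \TL_{n-1}$ embedded into $\TL_{n+1}$, $a$ carries through-strands at positions $n$ and $n+1$ and therefore leaves the cup of $\tilde v$ untouched, giving $a \cdot \tilde v = \widetilde{a \cdot v}$. By Frobenius reciprocity the $\TL_{n-1}$-equivariant assignment $v \mapsto \tilde v$ is equivalent data to the candidate map $\phi$. Second, $\TL_n$-linearity is immediate by construction. Third, bijectivity can be obtained either by a dimension count (both modules have dimension equal to the number of $(n+1, m)$-half diagrams, which can be read off from the standard branching short exact sequences for cell modules) or by constructing an explicit inverse. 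The inverse decomposes a given $(n+1, m)$-half diagram $w$ according to the pairing of the bottom site labelled $n+1$: if $n$ and $n+1$ are already joined by a cup, then $w = \tilde v$ directly; otherwise, one applies an appropriate $\TL_n$-generator $u_i$ to absorb or reroute the connection and reduce to the first case. The input needed is that the $\TL_n$-generators act transitively on the possible connection patterns at the rightmost site modulo diagrams of lower through-strand count.

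The main obstacle is ensuring that the "lower through-strand" filtration built into the definition of the cell modules is respected on both sides: any action that produces a diagram with fewer than $m$ through-strands must simultaneously vanish in $S(n-1, m)$ and in $S(n+1, m)$. This synchronisation reduces to matching up factors of $\delta$ coming from loops created in the diagram composition, and it is precisely this matching that breaks down in the isolated excluded case $\delta = 0$, $(n, m) = (2, 0)$: the loop value $\delta = 0$ decouples the two sides in the unique configuration where the extra cup of $\tilde v$ can close against an arc of the acting diagram without any counterpart on the $S(n-1, m)$ side. Outside this boundary case the loop contributions agree, $\phi$ is a bijection, and the proposition follows.
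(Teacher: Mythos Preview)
Your approach---define $\phi$ via the ``append a cup'' assignment $v\mapsto\tilde v$ and extend by Frobenius reciprocity---is exactly the construction in \cite{ridout_saint_aubin_2014}, which is all the paper invokes (it gives no independent argument beyond the remark that the characteristic-zero proof carries over unchanged). So the strategy is the right one.

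Two points need sharpening. First, bijectivity: the dimension count you suggest is not actually available, since $\TL_n$ is not free over $\TL_{n-1}$ (already $\dim\TL_3/\dim\TL_2=5/2$) and the only branching sequence on record is the one for restriction---appealing to an induction branching rule is circular here. You therefore need the explicit inverse, and ``apply a suitable $u_i$ to reroute'' is too loose: the argument in \cite{ridout_saint_aubin_2014} is a genuine case split on whether site $n{+}1$ is a defect, linked to site $n$, or linked to some $j<n$, with the last case handled by an inner recursion. Second, your diagnosis of the excluded case misplaces the failure. The map $\phi$ is well-defined and surjective for every $\delta$; what breaks at $\delta=0$ in the boundary configuration is \emph{injectivity}. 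A relation in the balanced tensor product that would collapse two basis elements (morally $u_1u_2\otimes v$ and $1\otimes v$) requires division by $\delta$, so when $\delta=0$ the induced module is one dimension too large and $\phi$ acquires a kernel.
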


The power in this is that restriction is easily understood, and again the characteristic zero proof applies over arbitrary rings.
\begin{proposition}\cite[4.1]{ridout_saint_aubin_2014}\label{prop:ses_res}
  There is a short exact sequence of $\TL_{n-1}$ modules,
  \begin{equation}
    0 \to S(n-1, m-1) \to S(n,m)\restr \to S(n-1,m+1) \to 0.
  \end{equation}
\end{proposition}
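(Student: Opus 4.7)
The plan is to prove the short exact sequence by using the half-diagram description of the cell modules and filtering by the behavior of the bottom input node. Recall that $S(n,m)$ has a basis of diagrams from $\underline{n}$ to $\underline{m}$ with exactly $m$ through-strands (anything with fewer through-strands is zero). I would partition this basis into two sets $B_1 \sqcup B_2$ according to the fate of the bottom input node (say, site $n$): $B_1$ consists of diagrams in which site $n$ is a through-strand, which by planarity must travel to the bottom output site $m$; and $B_2$ consists of diagrams in which site $n$ is capped to some upper input $j<n$.

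I would first show that $N := \mathrm{span}(B_1)$ is a $\TL_{n-1}$-submodule isomorphic to $S(n-1,m-1)$. Under the embedding $\TL_{n-1} \hookrightarrow \TL_n$ (adding a through-string at the bottom), the generators $u_1,\dots,u_{n-2}$ act only on inputs $1,\dots,n-1$, so they cannot disturb the bottom through-strand of any element of $B_1$; hence $N$ is stable. The map "delete the bottom through-strand" sends $B_1$ bijectively onto the basis of $S(n-1,m-1)$ and is $\TL_{n-1}$-equivariant since the removed strand is never touched by the action.

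Next I would construct a surjective $\TL_{n-1}$-homomorphism $\pi: S(n,m)\restr \to S(n-1,m+1)$ by "unfolding the bottom cap". For $d\in B_2$ with site $n$ capped to $j$, let $\pi(d)$ be the diagram on $n-1$ inputs and $m+1$ outputs obtained by deleting site $n$, cutting the cap, and routing the free endpoint at $j$ around the bottom of the rectangle to exit as a new through-strand appended below the existing $m$ outputs; for $d \in B_1$ set $\pi(d) = 0$. The resulting diagram has the original $m$ through-strands plus one new one, so lies in the basis of $S(n-1,m+1)$. This defines a bijection $B_2 \to \mathrm{basis}(S(n-1,m+1))$ with inverse obtained by "refolding" the bottom output into a cap, giving surjectivity and $\ker \pi = N$.

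The main obstacle is verifying $\TL_{n-1}$-equivariance of $\pi$, in particular tracking what happens when the action of some $u_i$ on a diagram $d\in B_2$ modifies the endpoint $j$ of the bottom cap (the case $j \in \{i,i+1\}$), or when $u_i d$ pairs up two through-strands and thus drops out of $S(n,m)$. In the first situation one must check that unfolding before or after acting by $u_i$ yields the same planar diagram; in the second, the loss of through-strands on the $n$-input side must correspond exactly to loss of through-strands on the $(n-1)$-input side of $S(n-1,m+1)$. This is a purely diagrammatic, case-by-case verification using planarity, and once complete the sequence $0 \to N \to S(n,m)\restr \to S(n-1,m+1) \to 0$ is the required resolution, with the first map being the inclusion and the second $\pi$. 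Since the entire argument is diagrammatic and uses no denominators, it works uniformly over any pointed ring, as claimed in the text.
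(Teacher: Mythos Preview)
The paper does not give its own proof of this proposition; it is quoted from Ridout--Saint-Aubin with the remark that ``the characteristic zero proof applies over arbitrary rings.'' Your proposal reproduces exactly that standard diagrammatic argument and is correct.

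The one place where you overstate the difficulty is the $\TL_{n-1}$-equivariance of $\pi$: no case analysis is needed once you observe that unfolding lifts to the map $\Hom_{\TLcat}(\underline n,\underline m)\to\Hom_{\TLcat}(\underline{n-1},\underline{m+1})$ obtained by tensoring with $\id_1$ and then composing with a single cup at the bottom of the $\underline{n+1}$ side (the adjoint of the ``turn-up'' operator used elsewhere in the paper). This visibly commutes with the $\TL_{n-1}$-action on the first $n-1$ source sites by the interchange law, and it carries diagrams of through-degree less than $m$ to diagrams of through-degree less than $m+1$, so it descends to the required map of cell modules with kernel exactly $N$.
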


We will be interested in inducing from $\TL_m$ to $\TL_n$ for $m<n$, which we will denote $M\ind_m^n$ and the corresponding restriction $M\restr_m^n$.
This is achieved by $n-m$ iterations of the induction described above.
A trivial consequence of \cref{prop:ses_res} is that
\begin{corollary}\label{cor:range}
  The module $S(n,m)\ind_n^N$ has a filtration by standard modules, and the multiplicity of $S(N,i)$ is given by
  \begin{equation*}
    \binom{N-n}{(m+N-n-i) / 2}.
  \end{equation*}
  In particular, the only $S(N,i)$ appearing are those for which $m - N + n \le i \le m + N-n$ and for $i \in \{m \pm (N-n)\}$ the factor $S(N,i)$ appears exactly once.
\end{corollary}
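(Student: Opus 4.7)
The plan is to induct on $d = N - n$. The base case $d = 0$ is immediate: $S(n,m)\ind_n^n = S(n,m)$ appears with multiplicity $\binom{0}{0}=1$ when $j=m$, matching the formula. For the inductive step, I would factor the induction as $\ind_n^N = \ind_{N-1}^N \circ \ind_n^{N-1}$ and apply the hypothesis to the inner step.

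The key technical input is that $\TL_N$ is free as a right $\TL_{N-1}$-module, which is standard in the cellular/diagrammatic setting (one can split the diagram basis according to the leftmost strand configuration). Consequently induction is exact, so the filtration of $S(n,m)\ind_n^{N-1}$ provided by the inductive hypothesis lifts to a filtration of $S(n,m)\ind_n^N$ whose successive quotients are $S(N-1,i)\ind_{N-1}^N$, indexed by $i$ with the multiplicities given by the hypothesis.

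Applying the preceding proposition with indices shifted, $S(N-1,i)\ind_{N-1}^N \cong S(N+1,i)\restr$, and \cref{prop:ses_res} then furnishes a two-step filtration $0 \to S(N,i-1) \to S(N+1,i)\restr \to S(N,i+1) \to 0$. Combining the two, the multiplicity of $S(N,j)$ in $S(n,m)\ind_n^N$ equals
\[
\binom{N-1-n}{\tfrac{m+N-n-j}{2}-1} + \binom{N-1-n}{\tfrac{m+N-n-j}{2}} = \binom{N-n}{\tfrac{m+N-n-j}{2}}
\]
by Pascal's identity, as required. The parity condition $m+N-n-j \equiv 0 \pmod{2}$ is automatic since $m \equiv n$ and $j \equiv N$ modulo $2$. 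The range bound and multiplicity-one endpoints then follow directly from the standard support and endpoint behaviour of $\binom{N-n}{k}$.

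The main obstacle is justifying that induction is exact, i.e.\ the flatness (in fact, freeness) of $\TL_N$ over $\TL_{N-1}$; this is the one ingredient that cannot be reduced to pushing around binomial coefficients. Everything else is a bookkeeping exercise in Pascal's identity.
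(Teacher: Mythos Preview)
Your proof is correct and is exactly what the paper intends; the paper itself offers no argument beyond declaring the result ``a trivial consequence of \cref{prop:ses_res}'', and your induction on $N-n$ together with Pascal's identity is the natural (indeed the only sensible) way to unpack that remark.
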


\subsection{Projective Covers of the Trivial Module}
Let us suppose that $R$ has $(\ell, p)$-torsion (and that $p = \infty$ if $R$ is characteristic zero and that $\ell = \infty$ if $\delta$ satisfies no quantum number).
Recall that each projective module of $\TL_n$ has a filtration by standard (cell) modules.
For $m \equiv_2 n$, let $P(n,m)$ be the projective cover of the simple head of $S(n,m)$ as a $\TL_n$-module.
A corollary of Theorems 3.4 and 8.4 of \cite{spencer_2020_modular} is
\begin{corollary}
  The multiplicity of $S(n,m)$ in a standard filtration of $P(n,m')$ is 1 iff $m \in I_{m'}$, otherwise it is 0.
\end{corollary}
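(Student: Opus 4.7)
The plan is to deduce this multiplicity formula from a BGG-style reciprocity for the cellular algebra $\TL_n$ together with the composition multiplicities of cell modules established in \cite{spencer_2020_modular}, which are what the cited Theorems 3.4 and 8.4 supply.

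First I would invoke the standard reciprocity for cellular algebras whose cell modules admit simple heads. Denote by $L(n,m')$ the simple head of $S(n,m')$, so that $P(n,m')$ is its projective cover. Then for any $m \equiv_2 n$ one has the equality
\begin{equation*}
  [P(n,m') : S(n,m)] \;=\; [S(n,m) : L(n,m')],
\end{equation*}
where the left-hand side is independent of the choice of standard filtration (being well-defined by cellularity) and the right-hand side is an ordinary Jordan--H\"older multiplicity. This bit is entirely formal: it is the cellular-algebra analogue of BGG reciprocity, and applies to $\TL_n$ in every $(\ell,p)$-modular setting because each non-zero cell module has simple head. The upshot is that we have reduced the question to computing composition factors of cell modules.

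Second I would feed in the classification of composition factors of $S(n,m)$ from \cite[Thms 3.4, 8.4]{spencer_2020_modular}. Those results describe precisely when $L(n,m')$ appears in $S(n,m)$ in the $(\ell,p)$-modular setting: the multiplicity is $1$ when $m \in \supp(m')$, and $0$ otherwise. Since $\supp(m') = I_{m'}$ by the definition of $I_n$ recalled at the beginning of \cref{sec:p-jones-wenzl}, substituting into the reciprocity above yields exactly the statement of the corollary.

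The only point that requires genuine care, rather than unpacking of definitions, is the bookkeeping needed to verify that $\supp$ in \cite{spencer_2020_modular} indexed by $m'$ corresponds bijectively (and with matching multiplicities) to the explicit symbol-sum description of $I_{m'}$ in terms of the $(\ell,p)$-expansion of $m'+1$. This is however a direct identification carried out once and for all at the start of \cref{sec:p-jones-wenzl}, so the substantive content is entirely contained in the two cited theorems and the reciprocity, with no further representation-theoretic input needed.
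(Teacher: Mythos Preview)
Your proposal is correct and matches the paper's intent: the paper gives no explicit proof but simply records the statement as a corollary of Theorems~3.4 and~8.4 of \cite{spencer_2020_modular}, and your unpacking via the cellular BGG reciprocity $[P(n,m'):S(n,m)]=[S(n,m):L(n,m')]$ together with the decomposition numbers from those theorems is precisely the intended route. There is nothing to add or correct.
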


Let $\nu_{(p)}(x) = 0$ if $\ell \nmid x$ and $\nu_p(x/\ell) + 1$ otherwise, so that $\nu_{(p)}(x)$ gives the position of the least significant nonzero $(\ell, p)$-digit of $x$.
A trivial consequence of the above is that if $S(n,m)$ appears in a standard filtration of $P(n,m')$, then $\nu_{(p)}(m) = \nu_{(p)}(m')$.

Suppose now that $N > n$ and that $m=N-n$ is such that $m < p^{(\nu_{(p)}(n))}$ so $\nu_{(p)}(n) > \nu_{(p)}(m) = \nu_{(p)}(N)$.
Consider the module $P(n,n)\ind_n^N$.
This is a projective module.
If we consider the filtration of $P(n,n)\ind_n^N$ by cell modules, we see by \cref{cor:range} that the $S(N,j)$ appearing in a filtration of $P(n,n)\ind_n^N$ must all satisfy
\begin{equation}
  i-N+n \le j \le i+N-n
\end{equation}
for some $i+1 \in I_n$.
In particular, the trivial module $S(N,N)$ appears exactly once and careful examination of \cref{prop:ses_res} shows that it must appear in the head of $P(n,n)\ind_n^N$.
As such, there is a \emph{unique} summand of $P(n,n)\ind_n^N$ isomorphic to $P(N,N)$.
Additionally, from \cref{eq:I_n_split,cor:range}, where $n_ap^{(a)} = m = N-n$, we have further that every module $S(N,i)$ appearing in a filtration of both $P(n,n)\ind_n^N$ and $P(N,N)$ does so in each exactly once.

\vspace{1em}
To make these observations numerical, consider the Grothendieck group of $\TL_n$, denoted here by $G_0(\TL_n)$.
This is a free abelian group with basis the isomorphism types of simple $\TL_n$ modules.
That is to say, its elements are given by (isomorphism classes of) finite-dimensional $\TL_n$-modules, modulo the relation that if there is a short exact sequence $0 \to M_1 \to M_2 \to M_3 \to 0$, then $[M_2] = [M_1] + [M_3]$, where $[M]$ is the class of $\TL_n$-module in $G_0(\TL_n)$.
Similarly, we will consider $K_0(\TL_n)$, the free abelian group on (isomorphism classes of) indecomposable projective modules with the same relation.

A consequence of~\cite[3.6]{graham_lehrer_1996} is that ${\left\{[S(n,i)]\right\}}_{i \in \Lambda_0}$ form a basis for $G_0(\TL_n)$.
In $G_0(\TL_n)$, the above discussion can be rephrased as
\begin{equation}\label{eq:grot_ind}
  \left[P(n,n)\ind_n^N\right] = \left[P(N,N)\right] + \sum_{j\in J} \left[S(N,j)\right]
\end{equation}
where $J$ is a multiset disjoint from $I_n$.
\Cref{eq:grot_ind} should be read as a rephrasing of~\cite[Lemma 4.11]{burrull_libedinsky_sentinelli_2019}, but without recourse to the $p$-Kazhdan-Lusztig basis, Soergel bimodule theory or Schur-Weyl duality.

\section{Main Result}\label{sec:results}

Recall the $cde$-Triangle of~\cite[\S 9.5]{webb_2016}:
\begin{center}
\begin{tikzcd}
  &G_0(\TL_n^{F})
  \arrow[dr,"d"]& \\
  K_0(\TL_n^k)\arrow[ur,"e"] \arrow[rr,hookrightarrow,"c"]&& G_0(\TL_n^k)
\end{tikzcd}
\end{center}
Here, $c$ simply takes the image of a module $[M] \in K_0(\TL_n^k)$ to its image $[M] \in G_0(\TL_n^k)$.

The map $d$ is defined on the basis of $G_0(\TL_n^{F})$.
Recall that this is a semi-simple algebra and the simple modules are exactly $S(n,i)$ for $i \le n$ and $i\equiv_2 n$.
We define $d([S(n,i)]) = [S(n,i)]$.
It is clear that $d$ is the transpose of the decomposition matrix.

To define $e$, one uses idempotent lifting techniques.
Let $P$ be a projective module of $\TL_n^k$ and suppose that $P \simeq \TL_n^k \cdot e$ for some idempotent $e$ defined in diagrams over $k$.
Then lift $e$ to an idempotent over $F$.
This defines a projective $\TL_n^F$-module $\hat{P}$ and the image $e[P]$ is $[\hat{P}]$.

It is classical theory that $c = de$.

\vspace{1em}

We now have all the results required to prove \Cref{prop:lift,thm:idem_prim_iso}.

\begin{proof}
  We show the results by mathematical induction on $|I_n|$.
  When $|I_n| = 1$, we have that $\pljwnR = \JW_n$ exists in $\TL_n^k$ by \cref{thm:when_jw}.

  Otherwise, assume both \Cref{prop:lift,thm:idem_prim_iso} hold for all $n'$ with $|I_{n'}| < |I_n|$.
  In particular the results are known for $\f{n}$.
  Thus $\pljwfnQ$ descends to an idempotent $\pljwfnR$ in $\TL_n^k$ which  describes the projective module $P(\f{n},\f{n})$.

  Now consider the module $P(\f{n}, \f{n})\ind_{\f{n}}^n$.
  This is isomorphic to the module $\TL_n^k \cdot g$ where $g$ is the idempotent $\pljwfnR \otimes \id_{n-\f{n}}$ in $\TL_n^k$.
  The composition factors of this module are described in \cref{eq:grot_ind} and the discussion preceding it.
  In particular, if $e$ is the unique idempotent of $\TL_n^k$ describing the projective cover of the trivial module, then
  $e \cdot P(\f{n}, \f{n})\ind_{\f{n}}^n$ is a non-zero module isomorphic to $P(n,n)$.

  As such $e \cdot g = g\cdot e = e$.
  If we lift both of these elements to $F$, say to $\hat{e}$ and $\hat{g}$ we obtain $\hat{e} \cdot \hat{g} = \hat{g} \cdot \hat{e} = \hat{e}$, so in particular
  \begin{equation}
    \hat{e} \in \hat{g} \cdot \TL_n^F \cdot \hat{g} \cong \End_{\TL^F_n}(\TL_n^F \cdot \hat{g}).
  \end{equation}

  But $\TL_n^F \cdot \hat{g}$ is exactly the lift of $P(\f{n},\f{n})\ind_{\f{n}}^n$ to characteristic zero and so $\hat{g} = \pljwfnQ \otimes \id_{n-\f{n}}$.
  Thus \cref{prop:absorption} claims that $\pljwnQ \cdot \hat{g} = \hat{g} \cdot \pljwnQ = \pljwnQ$.
  Thus too
  \begin{equation}
    \pljwnQ \in \hat{g} \cdot \TL_n^F \cdot \hat{g} \cong \End_{\TL^F_n}(\TL_n^F \cdot \hat{g}).
  \end{equation}

  However, recall that $\TL_n^F$ is semi-simple and that $e [P(\f{n},\f{n})\ind_{\f{n}}^n] = e[P(n,n)] + M$ where $M$ does not have support intersecting that of $e[P(n,n)]$.
  This is to say that there is a unique idempotent in $\End_{\TL_n^F}\left(P(\f{n},\f{n})\ind_\f{n}^n\right)$ with image having composition factors given by $e[P(n,n)]$.
  Clearly by construction $\hat{e}$ is such an idempotent, and \cref{cor:char_zero_factors} shows that $\pljwnQ$ is too.
  Hence they must be equal.
\end{proof}

\section{Traces}\label{sec:traces}
The trace map $\tau : \Hom_{\TLcat}(\underline{n}, \underline{m}) \to \Hom_{\TLcat}(\underline{n-1}, \underline{m-1})$ is defined diagrammatically as
\begin{equation}
  \tau f = 
  \vcenter{\hbox{
  \begin{tikzpicture}[scale=1, transform shape]
    \draw (0,0) -- (1,.5) -- (1,1.5) -- (0,2) -- (0,0);
    \node at (0.5,1) {$f$};
    \draw[very thick] (0,0.2)--(-.1,0.2);
    \draw[very thick] (0,0.4)--(-.5,0.4);
    \draw[very thick] (0,1.8)--(-.5,1.8);
    \draw[very thick] (-.1,.2) arc (90:270:0.2);
    \draw[very thick] (-.1,-.2) -- (1.1,-.2);
    \draw[very thick] (1,.7) -- (1.1,.7);
    \draw[very thick] (1.1,-.2) arc (-90:90:.45);
    \draw[very thick] (1,.8) -- (1.8,.8);
    \draw[very thick] (1,1.4) -- (1.8,1.4);
    \node at (-.25,1.2) {$\vdots$};
    \node at (1.4,1.2) {$\vdots$};
  \end{tikzpicture}
  }}
\end{equation}
By linear extension, this induces the trace $\tau : \TL_n \to \TL_{n-1}$ for each $n \ge 1$ as well as the ``full trace'' $\tau^n : \TL_n \to TL_{0}\simeq k$.
These full traces were critical in the construction of the Jones polynomial invariant of knots.
Computations of partial traces of generalized Jones-Wenzl idempotents are necessary for computations of certain truncated categories~\cite{sutton_tubbenhauer_2021, spencer_2021}.
We can also compare the partial trace of generalized and traditional Jones-Wenzl idempotents to get a sense of how the mixed characteristic generalises the semi-simple case.

The action of the trace on Jones-Wenzl elements over characteristic zero is well understood and is almost folklore.
\begin{lemma}\cite[Lemma 3.1]{burrull_libedinsky_sentinelli_2019}\label{lem:trace_jw}
  In $\TL_n^{\Q(\delta)}$,
  \begin{equation}\label{eq:trace_jw}
    \tau^m \left( \JW_n \right) = \frac{[n+1]}{[n+1-m]}\, \JW_{n-m}.
  \end{equation}
\end{lemma}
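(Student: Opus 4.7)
The plan is to prove the lemma by induction on $m$. The inductive step reduces everything to the case $m = 1$: once $\tau(\JW_k) = \tfrac{[k+1]}{[k]}\JW_{k-1}$ is known for every $k$, applying $\tau$ iteratively gives a telescoping product of quantum numbers that collapses to $\tfrac{[n+1]}{[n+1-m]}$. So the crux is the single-strand identity
\[
\tau(\JW_n) = \frac{[n+1]}{[n]}\, \JW_{n-1}.
\]

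To establish this, I would apply $\tau$ termwise to the recursive definition of $\JW_n$ from \cref{lem:recurse_jw_def},
\[
\JW_n = \JW_{n-1}\otimes\id_1 - \frac{[n-1]}{[n]}\bigl(\JW_{n-1}\otimes\id_1\bigr)\, u_{n-1}\, \bigl(\JW_{n-1}\otimes\id_1\bigr).
\]
In the paper's convention the added identity sits on the bottom strand — exactly the strand that $\tau$ closes — so tracing the first summand closes a free loop and returns $\delta\,\JW_{n-1}$. For the second summand, the key observation is that because $\JW_{n-1}\otimes\id_1$ acts trivially on the bottom strand, it lies in the image of the embedding $\TL_{n-1}\hookrightarrow\TL_n$ and may be pulled out of $\tau$ on either side: $\tau(y\, x\, y') = y\,\tau(x)\,y'$ for $y,y' \in \TL_{n-1}$. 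Applied with $y = y' = \JW_{n-1}\otimes\id_1$ and $x = u_{n-1}$, and using the idempotence of $\JW_{n-1}$, this collapses the second trace to $\JW_{n-1}\cdot\tau(u_{n-1})\cdot\JW_{n-1}$. A direct diagrammatic check shows $\tau(u_{n-1}) = \id_{n-1}$ — the cup and cap of $u_{n-1}$ compose with the trace arc to form a single through-strand at position $n-1$ — so the second summand contributes precisely $\JW_{n-1}$.

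Combining the two pieces yields
\[
\tau(\JW_n) = \left(\delta - \frac{[n-1]}{[n]}\right)\JW_{n-1} = \frac{\delta[n] - [n-1]}{[n]}\,\JW_{n-1},
\]
and the quantum-number recurrence $[n+1] = \delta[n] - [n-1]$ rewrites this as $\tfrac{[n+1]}{[n]}\JW_{n-1}$, completing the base case. The only real subtlety is the pull-out property of $\tau$ across the subalgebra $\TL_{n-1}$ together with the small computation $\tau(u_{n-1}) = \id_{n-1}$; both are routine from the geometric definition of $\tau$, but care must be taken to match the convention that the traced strand is the bottommost one.
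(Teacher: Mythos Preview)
Your argument is correct and is exactly the standard folklore proof of this identity. The paper itself does not supply a proof of \cref{lem:trace_jw}: it is stated with a citation to \cite[Lemma 3.1]{burrull_libedinsky_sentinelli_2019} and described in the surrounding text as ``well understood and almost folklore,'' so there is nothing further to compare against.
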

However, this breaks down over positive characteristic.
For example, the form of $\JW_{2p^{(r)}-1}$ given in \cref{prop:construct_2} makes clear that 
\begin{equation}\label{eq:trace_2}
  \tau\left(\JW_{2p^{(r)}-1}\right) = 2\; J_{p^{(r)}-1} \otimes J_{-p^{(r)}+1}.
\end{equation}
This can be read as stating that $\frac{[2p^{(r)}]}{[2p^{(r)}-1]}\JW_{2p^{(r)}-2}$ exists over $k$ and has value given by \cref{eq:trace_2}.
Notice that such a morphism has zero through-degree.

Recall from~\cite[Lemma 2.2]{spencer_2020_modular} that $[\ell m]/ [\ell n] = m / n$ in $k$.  The following is a trivial corollary of \cref{lem:trace_jw}.
\begin{corollary}\label{cor:trace_proper}
  Let $1 \le a \le p$ and $r \ge 1$.  Then as elements in $\TL^k$,
  \begin{equation}
    \tau^{p}\left(\JW^k_{a p^{(r)}-1}\right) = \frac{a}{a-1}\; \JW^k_{(a-1)p^{(r)}-1}.
  \end{equation}
\end{corollary}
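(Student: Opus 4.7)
The plan is to lift to characteristic zero, apply the semi-simple trace formula of \cref{lem:trace_jw}, and then simplify the resulting coefficient using Spencer's quantum-number identity cited just above the corollary.

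For $2 \le a \le p$ and $r \ge 1$ (the case $a=1$ is vacuous since $\JW_{-1}$ is undefined, so the trace strips too many strands), both $ap^{(r)}-1$ and $(a-1)p^{(r)}-1$ are $(\ell,p)$-Adam, so \cref{thm:when_jw} guarantees that $\JW_{ap^{(r)}-1}$ and $\JW_{(a-1)p^{(r)}-1}$ lie in $\TL^R$ with reductions giving the corresponding $\JW^k$'s. The trace $\tau$, being diagrammatic, commutes with the reduction $R\to k$. Over $F \supseteq \Q(\delta)$, \cref{lem:trace_jw} with $n = ap^{(r)}-1$ and $m = p^{(r)}$ gives
\[
\tau^{p^{(r)}}\bigl(\JW_{ap^{(r)}-1}\bigr) = \frac{[ap^{(r)}]}{[(a-1)p^{(r)}]}\, \JW_{(a-1)p^{(r)}-1}.
\]
It then remains to identify the image in $k$ of the coefficient. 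I would apply Spencer's Lemma~2.2 with $M = ap^{r-1}$ and $N = (a-1)p^{r-1}$, so that $\ell M = ap^{(r)}$ and $\ell N = (a-1)p^{(r)}$: the identity $[\ell M]/[\ell N] = M/N$ then gives $ap^{r-1}/((a-1)p^{r-1}) = a/(a-1)$, a well-defined element of $k$ since $a-1$ is coprime to $p$ for $2 \le a \le p$. Reducing the $F$-identity modulo $\mathfrak m$ yields the claim, with the degenerate case $a=p$ giving $p/(p-1) = 0 \in k$, consistent with the through-degree obstruction familiar from \cref{prop:construct_2}.

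The most delicate point will be the case $r \ge 2$: there both numerator and denominator of $[ap^{(r)}]/[(a-1)p^{(r)}]$ vanish in $k$ individually, and so does the literal ratio $ap^{r-1}/((a-1)p^{r-1})$. Spencer's identity must accordingly be interpreted as an equality of elements of $R$ obtained after cancelling the common factor of $p^{r-1}$ in the integer ratio before reducing, equivalently by tracking leading orders in the $\mathfrak m$-adic filtration on $R$. With this understood, the argument reduces to a single line of bookkeeping on the coefficient in \cref{lem:trace_jw}.
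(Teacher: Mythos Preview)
Your proposal is correct and matches the paper's approach exactly: the paper offers no argument beyond calling it a trivial corollary of \cref{lem:trace_jw} together with the cited identity $[\ell m]/[\ell n]=m/n$ in $k$, and you supply precisely those two ingredients (tacitly correcting the exponent on $\tau$ from $p$ to $p^{(r)}$ so that the strand counts agree). Your careful discussion of the $r\ge2$ subtlety goes beyond what the paper spells out, but the resolution you give is the intended one.
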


We now ask how the trace acts on elements $\pljwnQ$.

\begin{proposition}
  Recall that $n+1 = \sum_{i=a}^b n_i p^{(i)}$ and let $t = p^{(a)}$.
  \begin{equation}
    \tau^{t}\left(\pljwnQ\right)=
    \begin{cases}
      \frac{[n_a]}{[n_a-1]}\pljwnmtQ & n_a > 1\text{ and }a=0\\
      \frac{n_a}{n_a-1}\pljwnmtQ & n_a > 1\text{ and }a>0\\
      [2]\pljwnmtQ& n_a = 1\text{ and }a=0\\
      2\pljwnmtQ& n_a = 1\text{ and }a>0
    \end{cases}
  \end{equation}
\end{proposition}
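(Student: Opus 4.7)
The plan is to combine an absorption/structural argument (to show $\tau^t(\pljwnQ)$ is a scalar multiple of $\pljwnmtQ$) with an identity-coefficient calculation (to pin down the scalar). For the structural step, I would note $\pljwnQ = \pljwnQ \cdot (\pljwfnQ \otimes \id_m)$ by \cref{prop:absorption}; since the bottom $t$ strands of $\pljwfnQ \otimes \id_m = \pljwfnQ \otimes \id_{m-t} \otimes \id_t$ are identities, partial-trace cyclicity gives $\tau^t(\pljwnQ) = \tau^t(\pljwnQ) \cdot (\pljwfnQ \otimes \id_{m-t})$, and symmetrically on the left. Thus $\tau^t(\pljwnQ)$ lies in $(\pljwfnQ \otimes \id_{m-t}) \TL_{n-t}^{\Q(\delta)} (\pljwfnQ \otimes \id_{m-t})$. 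When $n_a > 1$ (so that $\f{n-t} = \f{n}$), I would invoke \cref{cor:char_zero_factors} and the through-strand cell filtration to identify the top-cell component of this subalgebra as one-dimensional, generated by $\pljwnmtQ$; hence $\tau^t(\pljwnQ) = c \cdot \pljwnmtQ$ for some $c \in \Q(\delta)$.

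To compute $c$, I would use the identity-coefficient. Expanding $\pljwnQ = \sum_{j \in I_n} \lambda_n^j U_n^j$, each $U_n^j$ factors through $\underline{j}$, so $\tau^t(U_n^j)$ has at most $j$ through-strands. For $\id_{n-t}$ to contribute we need $j \geq n - t$; since the second-largest element of $I_n$ is $n - 2m$ and $2m \geq 2t > t$, only the summand $j = n$ survives. This summand is $\JW_n$ (with $\lambda_n^n = 1$ and $p_n^n = \id_n$), and \cref{lem:trace_jw} gives $\tau^t(\JW_n) = \frac{[n+1]}{[n-t+1]} \JW_{n-t}$. As $\pljwnmtQ$ has identity-coefficient $1$, we read off $c = [n+1]/[n-t+1]$ in $\Q(\delta)$.

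The four cases of the proposition then arise on specialising $c$ to $k$ using \cite[Lemma 2.2]{spencer_2020_modular}. Writing $n+1 = n_a p^{(a)} + N$ and $n-t+1 = (n_a-1)p^{(a)} + N$ with $N = \sum_{i>a} n_i p^{(i)}$, for $a > 0$ both numerator and denominator of $c$ are divisible by $\ell$, so iterated application of that lemma (combined with the consequence $[\ell + b] \equiv -[\ell - b]$ in $k$) collapses the ratio to $n_a/(n_a - 1)$; the $n_a = 1$ subcase yields the stated $2$ via $q^{p^{(a)}} + q^{-p^{(a)}} \equiv 2$ in $k$. For $a = 0$ no $\ell$-divisibility is available and one reads off $[n_a]/[n_a - 1]$ directly, or $[2]$ when $n_a = 1$. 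The hardest step is the structural one, namely ruling out a \emph{modulo lower cells} caveat in the identification $\tau^t(\pljwnQ) = c \cdot \pljwnmtQ$; the cleanest way to handle it is a direct termwise computation against the decomposition, where the $+$ terms immediately yield $\tau^t(U_n^{i+m}) = \frac{[i+m+1]}{[i+m-t+1]} U_{n-t}^{i+(m-t)}$ by \cref{lem:trace_jw}, while the $-$ terms use the observation $\JW_i(\JW_{i-m} \otimes \id_m) = \JW_i$ (giving $U_n^{i-m} = p_n^{i-m}\iota p_n^{i-m}$) and a diagrammatic opening of caps, with the pieces reassembling via $[x+y] + [x-y] = ([y+1] - [y-1])[x]$ to reproduce $c \cdot \pljwnmtQ$ exactly. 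The degenerate case $n_a = 1$, where $\f{n-t} \neq \f{n}$, is handled by the same technique with separate bookkeeping.
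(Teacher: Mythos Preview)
Your proposal converges on the paper's argument, but only after a detour you yourself flag as incomplete. The absorption step showing $\tau^t(\pljwnQ)$ lies in the corner $(\pljwfnQ\otimes\id_{m-t})\,\TL_{n-t}\,(\pljwfnQ\otimes\id_{m-t})$ is fine, but over $\Q(\delta)$ that corner is a product of several matrix blocks (one for each cell factor of the induced module, not just those in $I_{n-t}$), so knowing the identity-diagram coefficient does not by itself pin down the element. You acknowledge this and retreat to a termwise computation; at that point the structural preamble is doing no work and can be dropped.

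The paper's proof is exactly that termwise computation, done cleanly. For $n_a>1$ (so $\f{n-t}=\f n$ and $m>t$) it records the two identities
\[
\tau^t\,U_n^{i+m}=\frac{[i+m+1]}{[i+m-t+1]}\,U_{n-t}^{\,i+m-t},
\qquad
\tau^t\,U_n^{i-m}=U_{n-t}^{\,i-m+t},
\]
the first from \cref{lem:trace_jw} (as you say) and the second from the purely diagrammatic fact that tracing $t$ of the $m$ nested caps in $p_n^{i-m}$ simply unfolds them into through-strands, with \emph{no} scalar factor. Your phrase ``diagrammatic opening of caps'' points at this, but you should state the resulting equality explicitly; it is what makes the lower terms line up exactly rather than only modulo lower cells. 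Matching against the expansion of $\pljwnmtQ$ then reduces to checking, for each $i\in I_{\f n}$, that the two ratios
$\frac{[i+1-m]}{[i+1-(m-t)]}$ and $\frac{[i+1+m]}{[i+1+(m-t)]}$
both equal $[m]/[m-t]$; the paper does this using $\ell\mid(i+1)$. Your identity-coefficient shortcut is the special case $i=\f n$, so it computes the right scalar but does not by itself establish equality termwise.

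For $n_a=1$ the paper, like you, treats the case separately: now $m=t$, both $\tau^t U_n^{i\pm m}$ land on $U_{\f n}^{\,i}$, and the coefficients combine via $[i+1+t]+[i+1-t]=\frac{[2t]}{[t]}[i+1]$, which is the identity you quote (in the form $[x+y]+[x-y]=([y+1]-[y-1])[x]$). So your sketch of the degenerate case matches the paper. In short: drop the structural prologue, make the $-$\nobreakdash-term identity explicit, and you have the paper's proof.
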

\begin{proof}
Let $m = n- \f{n} = n_ap^{(a)} > p^{(a)} = t$ so that in particular $\f{n} = \f{n-t}$ and 
\begin{equation}
  I_{n} = \left(I_\f{n} + m\right) \sqcup \left(I_{\f{n}} - m\right)
  \quad;\quad
  I_{n-t} = \left(I_\f{n} + m - t\right) \sqcup \left(I_{\f{n}} - m + t\right)
\end{equation}
Then notice that for $i \in I_\f{n}$,
\begin{align}
  \tau^t \; U_{n}^{i-m} &= U_{n-t}^{i-m+t}\label{eq:trace_u_1}\\
  \tau^t \; U_{n}^{i+m} &= \frac{[i+m+1]}{[i+m-t+1]}U_{n-1}^{i+m-t}\label{eq:trace_u_2}
\end{align}
Recall from \cref{eq:lambda-rec} that
\begin{equation}\label{eq:long_form_pljwnQ}
  \pljwnQ = \sum_{i+1 \in I_\f{n}} \left(
    \frac{[i+1-m]}{[i+1]} \lambda_\f{n}^i U_n^{i-m} + 
    \lambda_\f{n}^i U_n^{i+m}
  \right)
\end{equation}
and also
\begin{equation}\label{eq:long_form_pljwnmoQ}
  \pljwnmtQ = \sum_{i+1 \in I_\f{n}} \left(
    \frac{[i+1-m+t]}{[i+1]} \lambda_\f{n}^i U_{n-t}^{i-m+t} + 
    \lambda_\f{n}^i U_{n-t}^{i+m-t}
  \right)
\end{equation}
Then \cref{eq:trace_u_1,eq:trace_u_2} with \cref{eq:long_form_pljwnQ} give
\begin{multline}
  \tau^t\left( \pljwnQ \right) = \sum_{i+1 \in I_\f{n}} \Bigg(
    \frac{[i+1-m]}{[i+1-(m-t)]}
    \frac{[i+1+t-m]}{[i+1]} \lambda_\f{n}^i U_{n-t}^{i-m+t}\\ + 
    \frac{[i+1+m]}{[i+1+(m-t)]}
    \lambda_\f{n}^i U_{n-t}^{i+m-t}
  \Bigg)
\end{multline}
Now, for all $i+1 \in I_\f{n}$ we have that $\ell \mid i+1$ and so $[i+1 + m] = \pm [m]$ (depending on if $[i+1] = \pm 1$) and similarly for $[i+1-m]$.
Hence we see that all the factors actually fall out and
\begin{equation}\label{eq:t-form-trace-1}
  \tau^t\left(\pljwnR \right) = \frac{[m]}{[m-t]} \pljwnmtR,
\end{equation}
recovering a form of \cref{eq:trace_jw} for $(\ell, p)$-Jones-Wenzl elements.
However, if $a>0$ then this simplifies to $n_a/(n_a-1)$ and if $a=0$ this is simply $[n_a]/[n_a-1]$ as desired.

If, on the other hand, $\f{n} = n - t$ so that $m = t= p^{(a)}$, we get a slightly different beahvior (indeed, this is necessary to avoid a division by zero in \cref{eq:t-form-trace-1}).
Recall
\begin{equation}
  [i][j-k] + [j][k-i] + [k][i-j] = 0,
\end{equation}
so that
\begin{equation}
  [a + b] + [a-b] = \frac{[2b][a]}{[b]}.
\end{equation}
In particular
\begin{equation}
  [i+1 + p^{(a)}] + [i+1-p^{(a)}] = \frac{[2p^{(a)}][i+1]}{[p^{(a)}]} =
  \begin{cases}
    [2][i+1] & a = 0\\
    2[i+1] & a > 0
  \end{cases}.
\end{equation}
Thus if $n$ is $(\ell,p)$-Adam, then
\begin{align}
\nonumber  \tau^t\left( \pljwnR \right) &= 
  \sum_{i+1 \in I_\f{n}} \Bigg(
    \frac{[i+1-t]}{[i+1]}
    \lambda_\f{n}^i U_{n-1}^{i} +
    \frac{[i+1+t]}{[i+1]}
    \lambda_\f{n}^i U_{n-1}^{i}
  \Bigg)\\
\nonumber  &=
  [2] \sum_{i+1 \in I_\f{n}}
  \lambda_\f{n}^i U_\f{n}^i\\
  &=
  \begin{cases}
    [2]\pljwnmtR & a = 0\\
    2\pljwnmtR & a > 0
  \end{cases}.
\end{align}
\end{proof}

A result of the above computation is that $\tau^m \left(\pljwnR\right) = [2][n_0] \pljwfnR$ if $a = 0$ and $2n_a\pljwfnR$ otherwise.

The question of tracing $\pljwfnR$ by amounts other than valid $t$
is still not well understood.
For example, if we try trace a single strand when $n$ is Adam,
is clear that $\pljwnR$ is the image of $\JW_n$ and hence $\tau\pljwnR$ is the image of $[n+1]/[n]\,\JW_n$.
As such, since $\JW_n$ is killed by the action of all $u_i$, so too must $\tau\pljwnR$ be.
By considering all diagrams of the form $|x\rangle\langle y|$ (see \cite{spencer_2020_modular} for notation) where $y$ is a fixed diagram of maximal degree $d$, we see that this implies the existence of a trivial submodule of $S(n,d)$.
Knowledge of where these modules could exist then give us restrictions on the valid values of $d$.

As a first attempt at understanding the trace we may simply ask if $\tau\pljwnR$ has maximal through degree.
That is, does it have a non-zero coefficient of the identity diagram?
Clearly this is always the case if $n_0\neq 0$.
If $n$ is $(\ell,p)$-Adam, then \cref{lem:trace_jw} holds and $\tau^m(\pljwnR)$ has maximal through degree iff $\nu_{(p)}(n+1) = \nu_{(p)}(n+1-m)$.  Note that it is guaranteed that $\nu_{(p)}(n+1) \ge \nu_{(p)}(n+1-m)$ as $n$ is Adam.

We can use the definition of $\pljwnQ$ as a sum of the $U_n^i$ to calculate the case when $n$ has nonzero $(\ell,p)$ valuation and we trace by an amount less than $p^{(a)}$ (as any larger can be covered by \cref{cor:trace_proper} first).
\begin{proposition}
  Let $n+1 = \sum_{i=a}^b n_i p^{(i)}$ for $b>a>0$ and $t < n_a p^{(a)}$.
  Then the coefficient of the identity diagram in $\tau^t\left( \pljwnQ\right)$ is $[2]^t$.
\end{proposition}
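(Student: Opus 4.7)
The plan is to expand $\pljwnQ = \sum_{i \in I_\f{n}}\bigl(\tfrac{[i+1-m]}{[i+1]}\lambda^{i}_\f{n} U_n^{i-m} + \lambda^{i}_\f{n} U_n^{i+m}\bigr)$ and apply $\tau^t$ termwise, then isolate the coefficient of $\id_{n-t}$.

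First I would extend the trace identities of the preceding proposition (cf.\ \cref{eq:trace_u_1,eq:trace_u_2}) to the range $0 < t < n_a p^{(a)}$, namely
$\tau^t(U_n^{i+m}) = \tfrac{[i+m+1]}{[i+m+1-t]}\,U_{n-t}^{i+m-t}$ and $\tau^t(U_n^{i-m}) = U_{n-t}^{i-m+t}$.
For the $+m$ branch the factorisation $p_n^{i+m} = p_\f{n}^i \otimes \id_m$ lets the $t$ traced strands pass through the identity blocks on either side of the central $\JW_{i+m}$, reducing the computation to $\tau^t(\JW_{i+m})$ via \cref{lem:trace_jw}. For the $-m$ branch, the nested cap/cup structure of $p_n^{i-m}$ mirrored in $\iota p_n^{i-m}$ means that tracing the outer $t$ strands contracts the outermost $t$ cap/cup pairs into through-wires in $\JW_{i-m+t}$. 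In both cases, identifying the output as a $U_{n-t}^{j}$ requires the compatibility $p_{n-t}^{j} = p_{\f{n-t}}^{i} \otimes \id_{m-t}$ (or its folded analogue), which follows once one shows $\f{n-t}=\f{n}$ by a case analysis on the $(\ell,p)$-digit expansion of $n-t+1 = (m-t)+\f{n}+1$.

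Substituting these into the expansion gives
\[
\tau^t(\pljwnQ) = \sum_{i \in I_\f{n}} \Bigl(\tfrac{[i+1-m]}{[i+1]}\lambda^{i}_\f{n}\, U_{n-t}^{i-m+t} + \lambda^{i}_\f{n}\, \tfrac{[i+m+1]}{[i+m+1-t]}\, U_{n-t}^{i+m-t}\Bigr).
\]
Since $U_{n-t}^{j}$ factors through $j$ strands, it contributes to the coefficient of $\id_{n-t}$ only when $j = n-t$. The condition $i+m-t = n-t$ forces $i = \f{n}$ (from the $+m$ branch), while $i-m+t = n-t$ would require $i = \f{n} + 2(m-t) > \f{n}$, which is excluded by $t < m$. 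Hence only the $i=\f{n}$, $+m$ contribution survives, yielding identity coefficient $\lambda^{\f{n}}_\f{n}\cdot\tfrac{[n+1]}{[n+1-t]} = \tfrac{[n+1]}{[n+1-t]}$.

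The final step, and what I expect to be the main obstacle, is to identify $\tfrac{[n+1]}{[n+1-t]}$ with $[2]^t$ under the hypotheses $b > a > 0$ and $t < n_a p^{(a)}$. The input here is that $p^{(a)} \mid n+1$ (so $\ell \mid n+1$) while $t < n_a p^{(a)} < p^{(a+1)}$ keeps $n+1-t$ in a controlled congruence class; the target identity should emerge either by iterated application of the recursion $[a+b]+[a-b] = \tfrac{[2b]}{[b]}[a]$ to telescope the ratio down to $\delta^t$, or by explicit quantum-binomial manipulations in the $(\ell,p)$-modular reduction where the poles and zeros of the quantum numbers cancel to leave precisely $[2]^t$. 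Handling this quantum-number telescoping carefully under the digit constraints is the crux of the argument.
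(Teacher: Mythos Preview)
Your strategy matches the paper's exactly: decompose via $I_n=(I_{\f{n}}+m)\sqcup(I_{\f{n}}-m)$, discard the $-m$ branch by a through-degree bound, and in the $+m$ branch isolate $i=\f{n}$ as the unique contributor to the identity coefficient.

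One unnecessary detour: you aim for exact identities $\tau^t U_n^{i\pm m}=(\text{scalar})\,U_{n-t}^{i\pm(m-t)}$, hinging on $\f{n-t}=\f{n}$. That equality is false in general (for $a>0$ and $t=1$, the integer $n-t+1=n$ acquires a nonzero $0$-th $(\ell,p)$-digit, so its father is not $\f{n}$). The paper sidesteps this by arguing only with through-degrees: $\tau^t U_n^{i-m}$ has through-degree at most $i-m+t<n-t$, while $\tau^t U_n^{i+m}=(p_{\f{n}}^i\otimes\id_{m-t})\cdot(\tau^t\JW_{i+m})\cdot(\iota p_{\f{n}}^i\otimes\id_{m-t})$ factors through $\underline{i+m-t}$, forcing $i=\f{n}$. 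You reach the same conclusion either way, so this is cosmetic, but your case analysis for $\f{n-t}=\f{n}$ will not go through as stated.

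Your final paragraph is the substantive point. You compute the surviving coefficient as $\tfrac{[n+1]}{[n{+}1{-}t]}$ and correctly flag its identification with $[2]^t$ as the crux. The paper's proof, by contrast, simply asserts ``the coefficient is $[2]^t$'' with no justification beyond $\lambda_n^{n}=1$. Your caution is well placed: the identity $\tfrac{[n+1]}{[n{+}1{-}t]}=[2]^t$ is \emph{false} in $\Q(\delta)$ --- already for $t=1$ it would force $[n{-}1]=0$ --- so neither the telescoping you propose nor any other manipulation in $\Q(\delta)$ can close this gap. The coefficient over $\Q(\delta)$ really is $\tfrac{[n+1]}{[n{+}1{-}t]}$; your derivation makes this explicit, whereas the paper's terse argument conceals the discrepancy with the stated value $[2]^t$.
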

\begin{proof}
  We use the definition $\pljwnQ = \sum_{i+1\in I_n}\lambda_n^i U_n^i$ and evaluate $\tau^t U_n^i$.
  Since $n$ is not $(\ell, p)$-Adam, we can split $I_n$ into $I_\f{n} + m$ and $I_\f{n}-m$ for $m = n - \f{n}> t$.

  Referring to \cref{eq:recurse_pni}, we see that for $i \in I_{\f{n}}$, $\tau^t U_n^{i+m}$ has the unit diagram with nonzero coefficient iff $i = \f{n}$.  The coefficient is $[2]^t$ as $\lambda_n^{\f{n}} = 1$.
  On the other hand, the through degree of $U_n^{i-m}$ is $i-m+t < n-m+t<n$.
\end{proof}

\section*{Acknowledgements}
\noindent
The second author is supported by a DTP studentship from the Department of Pure Mathematics and Mathematical Sciences of the University of Cambridge funded by the Engineering and Physical Sciences Research Council.

\bibliographystyle{halpha}
\bibliography{all_cite}

\begin{thebibliography}{STWZ21}
\expandafter\ifx\csname url\endcsname\relax
  \def\url#1{\texttt{#1}}\fi
\expandafter\ifx\csname doi\endcsname\relax
  \def\doi#1{\burlalt{doi:#1}{http://dx.doi.org/#1}}\fi
\expandafter\ifx\csname urlprefix\endcsname\relax\def\urlprefix{URL }\fi
\expandafter\ifx\csname href\endcsname\relax
  \def\href#1#2{#2}\fi
\expandafter\ifx\csname burlalt\endcsname\relax
  \def\burlalt#1#2{\href{#2}{#1}}\fi

\bibitem[And19]{andersen_2019}
Henning~Haahr Andersen.
\newblock Simple modules for {T}emperley-{L}ieb algebras and related algebras.
\newblock {\em J. Algebra}, 520:276--308, 2019.
\newblock \doi{10.1016/j.jalgebra.2018.10.035}.

\bibitem[AST18]{andersen_catharina_tubbenhauer_2018}
Henning~Haahr Andersen, Catharina Stroppel, and Daniel Tubbenhauer.
\newblock Cellular structures using {$U_q$}-tilting modules.
\newblock {\em Pacific J. Math.}, 292(1):21--59, 2018.
\newblock \doi{10.2140/pjm.2018.292.21}.

\bibitem[BLS19]{burrull_libedinsky_sentinelli_2019}
Gaston Burrull, Nicolas Libedinsky, and Paolo Sentinelli.
\newblock {$p$}-{J}ones-{W}enzl idempotents.
\newblock {\em Adv. Math.}, 352:246--264, 2019.
\newblock \doi{10.1016/j.aim.2019.06.005}.

\bibitem[CGM03]{cox_graham_martin_2003}
Anton Cox, John Graham, and Paul Martin.
\newblock The blob algebra in positive characteristic.
\newblock {\em J. Algebra}, 266(2):584--635, 2003.
\newblock \doi{10.1016/S0021-8693(03)00260-6}.

\bibitem[EH02]{erdmann_henke_2002}
Karin Erdmann and Anne Henke.
\newblock On {S}chur algebras, {R}ingel duality and symmetric groups.
\newblock {\em J. Pure Appl. Algebra}, 169(2-3):175--199, 2002.
\newblock \doi{10.1016/S0022-4049(01)00071-8}.

\bibitem[EL17]{elias_libedinsky_2017}
Ben Elias and Nicolas Libedinsky.
\newblock Indecomposable {S}oergel bimodules for universal {C}oxeter groups.
\newblock {\em Trans. Amer. Math. Soc.}, 369(6):3883--3910, 2017.
\newblock \doi{10.1090/tran/6754}.
\newblock With an appendix by Ben Webster.

\bibitem[Eli16]{elias_2016}
Ben Elias.
\newblock The two-color {S}oergel calculus.
\newblock {\em Compos. Math.}, 152(2):327--398, 2016.
\newblock \doi{10.1112/S0010437X15007587}.

\bibitem[GL96]{graham_lehrer_1996}
J.~J. Graham and G.~I. Lehrer.
\newblock Cellular algebras.
\newblock {\em Invent. Math.}, 123(1):1--34, 1996.
\newblock \doi{10.1007/BF01232365}.

\bibitem[GL98]{graham_lehrer_1998}
J.~J. Graham and G.~I. Lehrer.
\newblock The representation theory of affine {T}emperley-{L}ieb algebras.
\newblock {\em Enseign. Math. (2)}, 44(3-4):173--218, 1998.

\bibitem[Mor17]{morrison_2014}
Scott Morrison.
\newblock A formula for the {J}ones-{W}enzl projections.
\newblock In {\em Proceedings of the 2014 {M}aui and 2015 {Q}inhuangdao
  conferences in honour of {V}aughan {F}. {R}. {J}ones' 60th birthday},
  volume~46 of {\em Proc. Centre Math. Appl. Austral. Nat. Univ.}, pages
  367--378. Austral. Nat. Univ., Canberra, 2017.
\newblock \urlprefix\url{https://projecteuclid.org/euclid.pcma/1487646031}.

\bibitem[RSA14]{ridout_saint_aubin_2014}
David Ridout and Yvan Saint-Aubin.
\newblock Standard modules, induction and the structure of the
  {T}emperley-{L}ieb algebra.
\newblock {\em Adv. Theor. Math. Phys.}, 18(5):957--1041, 2014.
\newblock \urlprefix\url{http://projecteuclid.org/euclid.atmp/1416929529}.

\bibitem[Sen19]{sentinelli_2019}
Paolo Sentinelli.
\newblock The {J}ones-{W}enzl idempotent of a generalized {T}emperley-{L}ieb
  algebra.
\newblock {\em Journal of Algebra}, 528:505--524, 2019.
\newblock \doi{10.1016/j.jalgebra.2019.03.008}.

\bibitem[Spe20]{spencer_2020_modular}
R.~A. Spencer.
\newblock The modular {T}emperley-{L}ieb algebra, 2020,
  \burlalt{2011.01328}{http://arxiv.org/abs/2011.01328}.

\bibitem[Spe21]{spencer_2021}
R.~A. Spencer.
\newblock Modular valenced temperley-lieb algebras.
\newblock August 2021, \burlalt{2108.10011}{http://arxiv.org/abs/2108.10011}.

\bibitem[STWZ21]{sutton_tubbenhauer_2021}
Louise Sutton, Daniel Tubbenhauer, Paul Wedrich, and Jieru Zhu.
\newblock Sl2 tilting modules in the mixed case.
\newblock May 2021, \burlalt{2105.07724}{http://arxiv.org/abs/2105.07724}.

\bibitem[TW17]{jensen_williamson_2015}
Lars {Thorge Jensen} and Geordie Williamson.
\newblock {The $p$-Canonical Basis for Hecke Algebras}.
\newblock {\em Categorification and Higher Representation Theory},
  683:333--361, 2017.

\bibitem[Web14]{webster_2014}
Ben Webster.
\newblock When are jones-wenzl projectors defined?
\newblock MathOverflow, 2014.
\newblock \urlprefix\url{https://mathoverflow.net/q/138270}.
\newblock URL:https://mathoverflow.net/q/138270 (version: 2014-09-16).

\bibitem[Web16]{webb_2016}
Peter Webb.
\newblock {\em A course in finite group representation theory}, volume 161 of
  {\em Cambridge Studies in Advanced Mathematics}.
\newblock Cambridge University Press, Cambridge, 2016.
\newblock \doi{10.1017/CBO9781316677216}.

\bibitem[Wes95]{westbury_1995}
B.~W. Westbury.
\newblock The representation theory of the {T}emperley-{L}ieb algebras.
\newblock {\em Math. Z.}, 219(4):539--565, 1995.
\newblock \doi{10.1007/BF02572380}.

\end{thebibliography}

\end{document}